\documentclass[pdflatex,sn-mathphys-num]{sn-jnl}


\usepackage{graphicx}%
\usepackage{multirow}%
\usepackage{amsmath,amssymb,amsfonts}%
\usepackage{amsthm}%
\usepackage{mathrsfs}%
\usepackage[title]{appendix}%
\usepackage{xcolor}%
\usepackage{textcomp}%
\usepackage{manyfoot}%
\usepackage{booktabs}%
\usepackage{algorithm}%
\usepackage{algorithmicx}%
\usepackage{algpseudocode}%
\usepackage{listings}%
\usepackage{enumitem}
\usepackage{lscape}
\usepackage{url}
\usepackage{hyperref}
\usepackage{longtable}


\newtheorem{theorem}{Theorem}
%
\newtheorem{lemma}[theorem]{Lemma}
\newtheorem{assumption}[theorem]{Assumption}

\newcommand{\dist}{{\rm dist}}
\DeclareMathOperator*{\argmin}{arg\,min}

\newcommand{\red}{\textcolor{red}{}\textcolor{black}}

\raggedbottom

\begin{document}

\title[Article Title]{A stochastic moving ball approximation method for smooth convex  constrained minimization }


\author[1]{\fnm{Nitesh Kumar} \sur{Singh}}\email{nitesh.nitesh@stud.acs.upb.ro}
\equalcont{These authors contributed equally to this work.}

\author*[1,2]{\fnm{Ion} \sur{Necoara}}\email{ion.necoara@upb.ro}
\equalcont{These authors contributed equally to this work.}


\affil[1]{\orgdiv{Automatic Control and Systems Engineering Department}, \orgname{National University of Science and Technology Politehnica Bucharest}, \orgaddress{\street{Spl. Independentei 313}, \city{Bucharest}, \postcode{060042}, \state{Bucharest}, \country{Romania}}}

\affil[2]{\orgname{Gheorghe Mihoc-Caius Iacob Institute of Mathematical Statistics and Applied Mathematics of the  Romanian Academy}, \orgaddress{\street{Street}, \city{Bucharest}, \postcode{050711}, \state{Bucharest}, \country{Romania}}}



\abstract{In this paper, we consider constrained optimization problems with convex objective and smooth convex  constraints. We propose a new stochastic gradient algorithm, called the Stochastic Moving Ball Approximation (SMBA) method, to solve this class of problems, where at each iteration we first take a (sub)gradient step for the objective function and then perform a projection step onto one ball approximation of a randomly chosen constraint. The computational simplicity of SMBA, which uses first-order information and considers only one constraint at a time, makes it suitable for large-scale problems with many functional constraints.  We provide a convergence analysis for the SMBA algorithm using basic assumptions on the problem, that yields new convergence rates in both optimality and feasibility criteria evaluated at some average point. Our convergence proofs are novel since we need to deal properly with infeasible iterates and with quadratic upper approximations of constraints that may yield empty balls. We derive convergence rates of order   $\mathcal{O} (k^{-1/2})$  when the objective function is convex, and $\mathcal{O} (k^{-1})$ when the objective function is strongly convex. Preliminary numerical experiments on quadratically constrained quadratic problems demonstrate the viability and performance of our method when compared to some existing state-of-the-art optimization methods and software.}

\keywords{Smooth convex constrained problems, quadratic approximation, stochastic projection algorithm, convergence rates.}



\maketitle


\section{Introduction}\label{sec1}

In this paper, we address the intricate challenges associated with the optimization of smooth functions subject to complex constraints. The central focus is on the following constrained optimization problem:
\begin{equation}\label{eq:probsmooth}
	\begin{array}{rl}
	f^* = & \min\limits_{x \in \mathcal{Y} \subseteq \mathbb{R}^n} \; f(x) \\
	& \text{subject to } \;  h(x,\xi) \le 0 \;\; \forall \xi \in \Omega,
	\end{array}
\end{equation}

\noindent where the constraints \red{$h(\cdot, \xi): \mathbb{R}^n \rightarrow \mathbb{R},\; \forall \xi \in \Omega$}, are assumed to be proper, convex, continuously differentiable smooth functions, while the objective \red{$f: \mathbb{R}^n \rightarrow \mathbb{R}$} is a proper convex function.  Additionally, $\mathcal{Y}$ is a nonempty closed convex set {which admits easy projections} and $\Omega$ is an arbitrary collection of indices (can be even infinite).  {Hence, we separate the feasible set in two parts:  one set, $\mathcal{Y}$, admits easy projections and the other part is not easy for projection as it is described by the level sets of some convex functions $h(\cdot,\xi)$'s.}  For such problems, we face many computational and theoretical challenges,  particularly, when they involve a \textit{large} number of constraints. 

\medskip 

\noindent \textit{Motivation}.  General constrained problems in the form of \eqref{eq:probsmooth}, with a large number of constraints, appear in many applications, such as systems and control \cite{BerBac:22, NedDin:14, NecNed:21},  machine learning and data science \cite{Vap:98, CheLi:21, BhaGra:04}, signal processing \cite{Nec:20, Tib:11}, and operations research and finance \cite{RocUry:00}. For example, in chance constrained minimization one needs to solve a problem of the form \cite{JinWan:22}: 
\begin{align}
\label{eq:chance_con_prob}
    \min_{x \in \mathcal{Y}} f(x) \quad  \text{s.t.} \;\;\;  \texttt{Prob$\red{(h(x,\xi)}\le 0)$}\ge 1 - \tau,
\end{align}
where $\xi$ is a random variable defined over a probability space. It has  been proven in \cite{CamGar:11} that the feasible set $\{ x \in \mathcal{Y} : \red{h(x,\xi)}\le 0 \quad  \forall \xi \in \Omega \}$
approximates the true feasible set of \eqref{eq:chance_con_prob} as the number of indices in $\Omega$ is sufficiently large. Therefore, chance constrained minimization \eqref{eq:chance_con_prob} can be well approximated by an optimization problem of the form  \eqref{eq:probsmooth}. Moreover,  quadratic functions are a powerful modeling construct in optimization and appear in various fields such as statistics, machine learning, portfolio optimization, optimal power flow, and control theory \cite{CheLi:21,WanTro:21}. In all these applications one needs to solve quadratically constrained quadratic programs (QCQP) of the form: 
\begin{align}
\label{qcqp}
& \min\limits_{x \in \mathcal{Y} \subseteq \mathbb{R}^n} \; \frac{1}{2}x^TQ_f x + q_f^T x \quad  \text{s.t.} \;\;\;  
	   \frac{1}{2} x^T Q_i x + q_i^T x - b_i \le 0 \;\; \forall i = 1:m, 
\end{align}
\noindent which is a particular case of our optimization problem \eqref{eq:probsmooth}, when the matrices $Q_f$ and $Q_i$, for all $i = 1:m$, are positive semidefinite.

\medskip

\noindent \textit{Related work}. 
In the realm of optimization, various first-order methods were designed to address the smooth optimization problem \eqref{eq:probsmooth}. A notable algorithm, which also represents the motivation for our current work,  is the moving balls approximation (MBA) method  \cite{AusTeb:10}. Specifically tailored for smooth (possibly nonconvex) constrained problems, MBA  generates a sequence of feasible points that approximates the constraints set by a sequence of balls, thus converting the original problem into a sequence of convex subproblems. Despite the fact that MBA exhibits linear convergence for smooth and strongly convex objective and constraints and asymptotic convergence for the nonconvex case, it requires \textit{initialization} at a feasible point and uses at each iteration the \textit{full} set of constraints, thus making the subproblem difficult to solve when the number of constraints is large. \red{To obtain a simpler subproblem,  in the same paper a variant of MBA is introduced that incorporates an active set technique, called MBA-AS, which has similar  convergence properties as original  MBA.} An enhanced version of MBA, named the sequential simple quadratic method (SSQM), is presented in \cite{Aus:13}. SSQM offers similar convergence results to MBA but eliminates the necessity to initiate from a feasible point. Other extensions of the MBA method (with possibly unbiased estimates of the gradients) are provided in \cite{BooLan:22},\cite{BooLan:23}, where composite objective and functional constraints are considered in the problem formulation and improved convergence results are derived in both convex and nonconvex settings. Finally, \cite{Nes:18} presents a first-order method to solve the smooth strongly convex problem  \eqref{eq:probsmooth} by converting it into a sequence of parametric max-type convex subproblems and derives a linear convergence rate for such scheme. However, these algorithms suffer from two technical issues: first, they need to consider the  \textit{full} set of constraints in the subproblem making them intractable when the number of constraints is large; second, they require an \textit{initialization} phase which can be computationally expensive.        

\medskip 

\noindent It is well-known that stochastic methods can overcome such technical problems. Stochastic (sub)gradient framework is a prevalent methodology for minimizing finite sum objective functions  \cite{RobMon:51, NemYud:83}. When combined with simple constraints, and the computation of the prox or projection operators is straightforward, a plethora of methods emerge, such as stochastic gradient descent (SGD) and stochastic proximal point (SPP) algorithms \cite{MouBac:11, NemJud:09, Nec:20, WanBer:16}.  Numerous extensions of SGD/SPP have been devised for solving convex problem \eqref{eq:probsmooth} with \textit{nonsmooth} functional constraints. At each iteration, such schemes perform a stochastic subgradient step for the objective and then project this step onto a halfspace lower approximation of a randomly chosen constraint,  see e.g., \cite{Ned:11, NecSin:22}, and their mini-batch variants \cite{SinNec:23, NecNed:21}.   Additionally,  Lagrangian primal-dual stochastic subgradient methods \red{(i.e., algorithms where both the primal and  dual variables are updated at each iteration)}  for convex nonsmooth functional constraint problems have been also designed, see e.g., \cite{Xu:20}. \red{The results from \cite{Xu:20} can  be also extended to the smooth constrained case using only tools from nonsmooth optimization}. All these algorithms achieve sublinear rates of convergence in terms of optimality and feasibility for an average sequence when they are combined with decreasing stepsize rules. However, extensions of stochastic first-order methods to problems with \textit{smooth} functional constraints are limited, mainly because for such constraints the corresponding quadratic \textit{upper} approximations may lead to empty sets (balls) in the subproblem. For example, a single-loop stochastic augmented Lagrangian primal-dual method is proposed in \cite{JinWan:22} for solving problem \eqref{eq:probsmooth} with smooth \red{nonconvex} functional constraints. This method achieves a convergence rate of order  $\mathcal{O} (k^{-1/4})$, where $k$ is the iteration counter, for finding an approximate KKT point. In cases where the initial point is feasible, this order reduces to $\mathcal{O} (k^{-1/3})$. However, to ensure the boundedness of dual multipliers, the penalty parameter in augmented Lagrangian and the stepsizes must be chosen dependent on the final iteration counter, hence it needs to be fixed a priori.

\medskip

\noindent \textit{Contributions}.  
In this paper we remove some of the previous issues, proposing a novel method for solving smooth constrained problems, called the \textit{Stochastic Moving Ball Approximation} (SMBA) algorithm, that performs first a {(sub)}gradient step for the objective and then projects this step onto a quadratic upper approximation of a randomly chosen constraint using an adaptive stepsize. Thus, SMBA updates only the primal decision variables, omitting dual updates, marking a distinctive contribution. \textit{To the best of our knowledge, this paper presents the first convergence analysis of such a primal stochastic algorithm in the smooth settings}. Hence, this paper makes the following key contributions.

 $(i)$ We consider a general optimization model in the form of an optimization problem \eqref{eq:probsmooth}, with {the objective} being either convex or strongly convex and with a large number of smooth convex functional constraints. 

 $(ii)$ We design a new primal algorithm, called Stochastic Moving Ball Approximation (SMBA), to solve such optimization problems with smooth, convex functional constraints. At each iteration, SMBA performs a {(sub)}gradient step for the objective function, and then takes a projection step onto a quadratic upper approximation of a randomly chosen constraint using an adaptive stepsize rule. The computational simplicity of the SMBA subproblem, which uses gradient information and considers only one constraint at a time, makes it suitable for problems with many functional constraints. Notably, SMBA updates only the primal decision variables and does not need to be initialized at a feasible point, which makes it advantageous over existing algorithms like MBA \cite{AusTeb:10} or the method in  \cite{JinWan:22}. Moreover, our algorithm is novel, since it needs to deal properly with quadratic upper approximations of constraints that may yield empty balls.   

$(iii)$  We provide a convergence analysis for the SMBA algorithm, under basic assumptions,  that yields new convergence rates in both optimality and feasibility criteria evaluated at some average point when the functional constraints are convex. In particular, we derive convergence rates of order   $\mathcal{O} (k^{-1/2})$  when the objective function is convex, and $\mathcal{O} (k^{-1})$ when the objective function is strongly convex. To the best of our knowledge,  these convergence results for SMBA are new and match the known convergence rates for stochastic first-order methods from the literature. Moreover, our proofs are novel, since we need to deal properly with infeasible iterates and with quadratic upper approximations of constraints that may yield empty sets. 


$(iv)$  Finally, numerical simulations on large-scale (strongly) convex quadratic problems with convex quadratic constraints using synthetic and real data demonstrate the viability and performance of our method when compared to some state-of-the-art optimization algorithms, such as its deterministic counterparts MBA \red{and MBA-AS} \cite{AusTeb:10}, and dedicated software, e.g., FICO \cite{FICO} and Gurobi~\cite{Gurobi}.

\medskip 

\noindent \textit{Content}.  The remainder of this paper is organized as \red{follows.} In Section 2, we introduce the essential notations and the key assumptions. Section 3, presents our new algorithm SMBA. In Section 4, we delve into the convergence analysis of the SMBA algorithm. Finally, in Section 5, we present numerical results that validate the practical efficacy of the SMBA algorithm.


\section{Notations and assumptions}
\noindent Throughout this paper, we use the following notations. For optimization problem \eqref{eq:probsmooth} we consider the individual sets $\mathcal{X}_\xi$ as:
\[ \red{\mathcal{X}_\xi = \{ x\in \mathbb{R}^n  : h(x, \xi) \le 0\}  \;\;\; \forall \xi \in \Omega .} \]
Thus,  the feasible set of \eqref{eq:probsmooth} is:
$$\mathcal{X}= \red{\mathcal{Y}\cap (\cap_{\xi \in \Omega} \mathcal{X}_\xi)}. $$
We assume the feasible set $\mathcal{X}$ to be non-empty {and to be difficult to project onto it}. We also assume that the optimization problem \eqref{eq:probsmooth} has a finite optimum and we let $f^*$ and $\mathcal{X}^*$ to denote the optimal value and the optimal set, respectively:
\[ f^* =  \min_{x\in \mathcal{X}} f(x), \quad \mathcal{X}^* = \{ x \in \mathcal{X}: \;  f(x) = f^* \} \neq \emptyset. \]
For any $x \in \mathbb{R}^n$ we denote its projection onto the optimal set $\mathcal{X}^* $ by $\bar{x}$, i.e., $\bar{x} = \Pi_{\mathcal{X}^*}(x)$ (we use the notation $\Pi_{\mathcal{B}}(x)$ for  projection of a point $x$ onto a closed convex set $\mathcal{B}$). For a given scalar $a$ we denote   $(a)_+ = \max (a,0)$. We make no assumption on the differentiability of $f$ and use,  with some abuse of notation, the same expression for the gradient or the subgradient of $f$ at $x$, that is $\nabla f(x) \in \partial f(x)$, where the subdifferential $\partial f(x)$ is either a singleton or a nonempty set. \red{Let $g$ be a smooth function (i.e., it has Lipschitz continuous gradient), then the quadratic upper approximation of $g$ at a point $x$ is denoted as:}
 \begin{align*}
 	q_{g}(y;x) = g(x) + \langle \nabla g(x), y-x \rangle + \frac{L_g}{2} \|y-x\|^2, 
 \end{align*}
where $L_g>0$ denotes the Lipschitz constant of the gradient of $g$.  Equivalently, we have the following rewriting for the quadratic approximation:
 \begin{align*}
 \frac{2}{L_{g}} q_{g}(y; x) & = \left\|y - \left(x - \frac{1}{L_g} \nabla g(x) \right) \right\|^2 - \frac{1}{L_g^2} \|\nabla g (x)\|^2 + \frac{2 g(x)}{L_g}\\
 & =  \|y-c_{x, g}\|^2 - R_{x, g}, 
 \end{align*}
 where we denote the center and the radius of the corresponding ball with 
 \[ c_{x, g} = x - \frac{1}{L_g} \nabla g(x) \;\; \text{and}\;\; R_{x, g} = \frac{1}{L_g^2} \|\nabla g (x)\|^2 - \frac{2 g(x)}{L_g}.\]

\noindent Finally, for optimization problem \eqref{eq:probsmooth}, we impose the following assumptions. First,  let us present the assumptions related to  the objective function~$f$:
\begin{assumption}\label{assumption1smooth}
    {The function  $f$ satisfies a (strong) convexity condition, i.e., there exists  $\mu \geq  0$ such that the following relation holds:}
	\begin{equation}
		\label{as:strg_conv_fsmooth} 
		f(y) \ge f(x) + \langle \nabla f(x), y-x \rangle + \frac{\mu}{2}  \|y  - x\|^2  \quad \forall x, y \in \mathcal{Y}.
	\end{equation}
\end{assumption}   
\noindent From Assumption \ref{assumption1smooth}, the objective function $f$ is convex when $\mu = 0$ and when $\mu > 0$ there exists a unique $x^* \in \mathcal{X}^*$ such that the following holds:
\begin{align}\label{eq:QGsmooth}
    f(x) - f(x^*) \ge \frac{\mu}{2}  \|x  - x^*\|^2 \quad \forall x \in \mathcal{X} \subseteq \mathcal{Y}.
\end{align}

\noindent Next, we state the assumptions for the functional constraints $h(\cdot, \xi)$:
\begin{assumption}\label{assumption2smooth}
The convex functional constraints $h(\cdot, \xi)$ are smooth (have Lipschitz continuous gradient), i.e., \red{there exists} $L_\xi > 0$ such that the following relation is true:
 \[  \| \nabla h(x, \xi)  - \nabla h(y, \xi)  \| \leq L_\xi \|x-y \|  \quad \forall x,y \in {\mathcal{Y}} \;\; \text{and} \;\; \forall \xi \in \Omega. \]
\end{assumption}
\noindent As a consequence of this assumption we also have (see Lemma 1.2.3 in \cite{Nes:18}):
\begin{align}\label{eq:smooth_hsmooth}
    h(y, \xi) \le h(x, \xi) + \langle \nabla h(x, \xi), y-x \rangle + \frac{L_\xi}{2} \|y-x\|^2 \;\;\;  \forall x,y \in {\mathcal{Y}},  \; \forall \xi \in \Omega.
\end{align}
\noindent Note that the right side of \eqref{eq:smooth_hsmooth} is the quadratic upper approximation for the smooth functional constraints. Finally, our last assumption is a linear regularity type condition on the functional constraints. Let us endow $\Omega$ with a probability distribution $\mathbf{P}$ and $\mathbb{E}_{\xi} \left[ \cdot \right]$ denotes the expectation of a scalar depending on the random variable $\xi \in \Omega$. 
\begin{assumption}\label{assumption3smooth}
    The functional constraints satisfy the following regularity condition, i.e., there exists a non-negative constant $c>0$ such that:
    \begin{equation}\label{eq:lin_regsmooth}
        \dist^2(y, \mathcal{X}) \le c \cdot  \mathbb{E}_{\xi} \left[ (h(y,\xi))_+^2 \right], \;\; \forall y \in {\mathcal{Y}}.
    \end{equation}
\end{assumption}

\noindent Note that Assumption \ref{assumption3smooth} is used frequently in the literature, see e.g.,  \cite{Ned:11,NecSin:22}.  It holds e.g., when the feasible set $\mathcal{X}$ has an interior point,  when the feasible set is polyhedral, or,  more general, when a strengthened Slater condition holds for the collection of functional constraints, such as the generalized Robinson condition \cite{LewPan:98}(Corollary 3).


\section{Stochastic moving ball approximation algorithm}
 In this section, we propose a novel stochastic gradient algorithm, called the Stochastic Moving Ball Approximation (SMBA) method, to solve problem \eqref{eq:probsmooth}.  At each iteration, SMBA takes a {(sub)}gradient step for the objective function, to minimize it, and then performs a projection step onto one ball approximation of a randomly chosen constraint to reduce the infeasibility. Hence, for a given iteration  $k$, we consider the random variable $\xi_k$ sampled from $\Omega$ according to the probability distribution $\mathbf{P}$ and then we perform the following steps (by convention we assume $0/0 = 0$):
\begin{center}
	\noindent\fbox{%
		\parbox{10.2cm}{%
			\textbf{Algorithm 1 (SMBA)}:\\
			$\text{Choose} \; x_0 \in \mathbb{R}^n, \; \text{positive stepsize sequence} \; \red{(\alpha_k)_{k\geq 0}} $  and $\beta > 0$. \\
			$\text{For} \; k \geq 0 \;  \text{repeat:}$
			\begin{align}
			& \text{Sample} \; \xi_k\sim \textbf{P} \; \text{and update:} \nonumber  \\
			&v_k = {\Pi_\mathcal{Y}}(x_{k} - \alpha_k \nabla f(x_{k})) \label{eq:alg2step1SMBA}\\
			& \text{Choose $p_{v_k, \xi_k}\ge 0$ and compute: } \nonumber\\
			& z_{k} = v_k - \beta \frac{(h(v_k, \xi_k))_+}{p_{v_k, \xi_k}} \nabla h(v_k, \xi_k) \label{eq:alg2step3SMBA}\\
			& x_{k+1} = \Pi_{\mathcal{Y}} (z_k).\label{eq:alg2stepSMBA}
			\end{align}
		}%
	}
\end{center} 

\medskip 

\noindent The computational simplicity of SMBA, which uses only the {(sub)}gradient of the objective and {the} gradient of a single constraint at a time, makes it suitable for large scale problems with many functional constraints. Note that many primal first-order algorithms \red{(i.e., algorithms updating at each iteration only the primal variables, $x$, based on gradient information)} have been proposed in the literature for solving optimization problems with {smooth} functional constraints \cite{AusTeb:10, Aus:13, BooLan:22, Nes:18}. However, these methods are usually deterministic, i.e., at each iteration \red{they need to evaluate the whole set of constraints and thus they do not scale well w.r.t. the number of constraints}. \textit{From our best knowledge,  SMBA is the first primal gradient algorithm that is of stochastic nature for solving optimization problems {of the form \eqref{eq:probsmooth}}, i.e., it uses the gradient of only one randomly chosen constraint at each iteration and thus it is scalable w.r.t. the number of functional constraints. }     

\medskip 

\noindent For simplicity of the exposition, for the functional constraint  $h(\cdot, \xi_k)$ we denote the center $c_{v_k, h(\cdot, \xi_k)} = c_{v_k, \xi_k}$ and the radius $R_{v_k, h(\cdot, \xi_k)} = R_{v_k, \xi_k}$.  In order to deal properly with  quadratic upper  approximations of constraints that may yield empty  balls, we consider the following \textit{novel choice} for $p_{v_k, \xi_k}$, which depends on the current iterate, hence it is adaptive:
\begin{align}\label{eq:alg2step2SMBA}
    p_{v_k, \xi_k} = \frac{(h(v_k, \xi_k))_+ \cdot L_{\xi_k}}{\left(1 - \frac{\sqrt{(R_{v_k, \xi_k})_+} }{ \|v_k - c_{v_k, \xi_k}\|}\right)}.
\end{align}
\noindent Our algorithm SMBA has three main steps.  First, {\eqref{eq:alg2step1SMBA} is a  projected (sub)}gradient step, i.e.,  we minimizes a quadratic approximation of the objective function $f$:
\[ {u_k} = \argmin\limits_{y \in \mathbb{R}^n} \; \left( \red{ q_f(y;x_k)} := f(x_k) + \langle \nabla f(x_k), y-x_k \rangle + \frac{1}{2\alpha_k} \|y-x_k\|^2 \right), \]
which yields:
\[ {u_k} = x_k - \alpha_k \nabla f(x_k), \]
{and then we project $u_k$ onto the simple set $\mathcal{Y}$ to obtain an intermittent point $v_k$, i.e., $v_k = \Pi_\mathcal{Y} (u_k)$. It is important to note that we perform the projections in steps \eqref{eq:alg2step1SMBA} and \eqref{eq:alg2stepSMBA} in order to have our iterates $v_k$ and $x_k$ in the set $\mathcal{Y}$ where our Assumptions \ref{assumption1smooth}-\ref{assumption3smooth} are valid.} Next, we have a stochastic gradient projection step minimizing the feasibility violation of the chosen random constraint $\xi_k$. Although our feasibility step has a compact expression, it describes three types of updates.  Indeed, combining the  expressions of $z_{k}$ from \eqref{eq:alg2step3SMBA}  with that of $p_{v_k, \xi_k}$ from \eqref{eq:alg2step2SMBA}, we get:
\begin{align*}
	z_{k} = \!\begin{cases} 
		v_k,  & \!\mbox{if }  h(v_k,\xi_k) \le 0  \\ 
		v_k - \frac{\beta}{L_{\xi_k}} \left(1 - \frac{\sqrt{R_{v_k, \xi_k}}}{\|v_k - c_{v_k, \xi_k} \|} \right) \nabla h(v_k, \xi_k),  & \!\mbox{if } R_{v_k,\xi_k} > 0, h(v_k,\xi_k) > 0\\
		v_k - \frac{\beta}{L_{\xi_k}} \nabla h(v_k,\xi_k), & \!\mbox{if } R_{v_k,\xi_k} \le 0, h(v_k,\xi_k) > 0 .
	\end{cases}
\end{align*}
Alternatively, if define the ball corresponding to the  quadratic upper approximation of the single functional constraint $h(\cdot,\xi_k)$: 
\[ \mathcal{B}_{{v}_k,\xi_k} = \left\{y: \; q_{h(\cdot,\xi_k)}(y;{v}_k) \le 0 \right\} = \left\{y: \; \|y-c_{{v}_k,\xi_k}\|^2 \le R_{{v}_k,\xi_k} \right\},   \]
then the update in \eqref{eq:alg2step3SMBA} can be written equivalently as (see Appendix A): 
\begin{align*}
& z_{k} = \begin{cases} 
v_k,  & \mbox{if }  h(v_k,\xi_k) \le 0  \\ 
(1-\beta)v_k + \beta  \Pi_{\mathcal{B}_{{v}_k,\xi_k}} (v_k), & \mbox{if } R_{v_k,\xi_k} > 0, \;  h(v_k,\xi_k) > 0\\
v_k - \frac{\beta}{L_{\xi_k}} \nabla h(v_k,\xi_k), & \mbox{if } R_{v_k,\xi_k} \le 0, \; h(v_k,\xi_k) > 0.
\end{cases}
\end{align*}
Thus, in this second step for a constraint $\xi_k\in \Omega$ if $v_k$ is feasible for that constraint we do not move. Otherwise,  we project the point $v_k$ onto the ball $\mathcal{B}_{v_k,\xi_k}$, which is nonempty if $R_{v_k,\xi_k} > 0$; otherwise, if $R_{v_k,\xi_k} \leq  0$, then the ball is empty and we choose to minimize the quadratic approximation of $h(\cdot,\xi_k)$ at $v_k$, denoted $q_{h(\cdot,\xi_k)}(y;v_k)$. Finally, in the third step, we project $z_k$ onto the simple set $\mathcal{Y}$ (by simple we mean that the projection onto this set is easy). 

\medskip 

\noindent Next, we provide a lemma which shows that our choice for $p_{v_k, \xi_k}$ is well-defined.

\medskip 
\begin{lemma}\label{lemma_p_defSMBA}
Let Assumption \ref{assumption2smooth} hold. Then, the parameter 	$p_{v_k, \xi_k}$ from \eqref{eq:alg2step2SMBA}  is well-defined and satisfies:
\begin{align}\label{eq:p_defSMBA}
	p_{v_k, \xi_k} \begin{cases}
		= 0, & \text{ if } \; h(v_k, \xi_k) \le 0,\\
		> 0, & \text{ if } \; h(v_k, \xi_k) > 0.
	\end{cases}
	\end{align} 
\end{lemma}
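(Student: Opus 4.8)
The plan is to reduce the whole statement to a single comparison between the two quantities $\sqrt{(R_{v_k,\xi_k})_+}$ and $\|v_k-c_{v_k,\xi_k}\|$ that appear in the denominator of the definition \eqref{eq:alg2step2SMBA}. First I would unfold the center and radius of the quadratic upper approximation of $h(\cdot,\xi_k)$ at $v_k$, namely $v_k-c_{v_k,\xi_k}=\frac{1}{L_{\xi_k}}\nabla h(v_k,\xi_k)$, so that $\|v_k-c_{v_k,\xi_k}\|=\frac{1}{L_{\xi_k}}\|\nabla h(v_k,\xi_k)\|$, and then record the elementary identity
\begin{align*}
R_{v_k,\xi_k}=\|v_k-c_{v_k,\xi_k}\|^2-\frac{2\,h(v_k,\xi_k)}{L_{\xi_k}}.
\end{align*}
The key observation is that the sign of $h(v_k,\xi_k)$ alone decides whether $R_{v_k,\xi_k}$ lies above or below $\|v_k-c_{v_k,\xi_k}\|^2$, and hence fixes the sign of the denominator $D:=1-\frac{\sqrt{(R_{v_k,\xi_k})_+}}{\|v_k-c_{v_k,\xi_k}\|}$.

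Next I would split into the two branches of the claim. In the feasible case $h(v_k,\xi_k)\le 0$ the identity gives $R_{v_k,\xi_k}\ge\|v_k-c_{v_k,\xi_k}\|^2\ge 0$, so $(R_{v_k,\xi_k})_+=R_{v_k,\xi_k}$ and $\sqrt{(R_{v_k,\xi_k})_+}\ge\|v_k-c_{v_k,\xi_k}\|$, i.e.\ $D\le 0$; since the numerator $(h(v_k,\xi_k))_+L_{\xi_k}$ is exactly zero, this forces $p_{v_k,\xi_k}=0$. In the infeasible case $h(v_k,\xi_k)>0$ I would instead show $D$ is strictly positive: if $R_{v_k,\xi_k}\ge 0$ then $L_{\xi_k}^2(R_{v_k,\xi_k})_+=\|\nabla h(v_k,\xi_k)\|^2-2L_{\xi_k}h(v_k,\xi_k)<\|\nabla h(v_k,\xi_k)\|^2=L_{\xi_k}^2\|v_k-c_{v_k,\xi_k}\|^2$, whereas if $R_{v_k,\xi_k}<0$ then $(R_{v_k,\xi_k})_+=0<\|v_k-c_{v_k,\xi_k}\|^2$; in either case $\sqrt{(R_{v_k,\xi_k})_+}<\|v_k-c_{v_k,\xi_k}\|$, so $0<D\le 1$. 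Together with the strictly positive numerator $h(v_k,\xi_k)L_{\xi_k}>0$, this gives a finite nonzero denominator (hence well-definedness) and $p_{v_k,\xi_k}>0$.

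The only delicate point, and the one I expect to be the real obstacle, is the bookkeeping of the degenerate configurations in which the inner ratio $\frac{\sqrt{(R_{v_k,\xi_k})_+}}{\|v_k-c_{v_k,\xi_k}\|}$ is not an honest quotient of positive reals: the boundary $h(v_k,\xi_k)=0$ (where $R_{v_k,\xi_k}=\|v_k-c_{v_k,\xi_k}\|^2$ and $D$ vanishes) and the case $\nabla h(v_k,\xi_k)=0$ (where $\|v_k-c_{v_k,\xi_k}\|=0$). In both of these the numerator $(h(v_k,\xi_k))_+L_{\xi_k}$ is already zero whenever $h(v_k,\xi_k)\le 0$, so the convention $0/0=0$ makes $p_{v_k,\xi_k}=0$ unambiguous and consistent with the feasible branch. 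The remaining sub-case $\nabla h(v_k,\xi_k)=0$ with $h(v_k,\xi_k)>0$ cannot arise, since it would make $v_k$ a global minimizer of the convex function $h(\cdot,\xi_k)$ with strictly positive value, forcing $h(\cdot,\xi_k)>0$ everywhere and contradicting $\mathcal{X}\neq\emptyset$. Thus the substantive content is just the one-line strict estimate $(R_{v_k,\xi_k})_+<\|v_k-c_{v_k,\xi_k}\|^2$ of the infeasible case, and the effort goes into checking that these boundary and degenerate points leave no ambiguity rather than into any hard inequality.
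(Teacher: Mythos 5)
Your proof is correct and follows essentially the same route as the paper: split on the sign of $h(v_k,\xi_k)$, note the numerator vanishes in the feasible case, and in the infeasible case establish the strict inequality $\sqrt{(R_{v_k,\xi_k})_+}<\|v_k-c_{v_k,\xi_k}\|$ from the identity $R_{v_k,\xi_k}=\|v_k-c_{v_k,\xi_k}\|^2-\tfrac{2h(v_k,\xi_k)}{L_{\xi_k}}$. Your extra bookkeeping of the degenerate configurations ($h(v_k,\xi_k)=0$ and $\nabla h(v_k,\xi_k)=0$) is sound and slightly more careful than the paper's argument, but does not change the substance.
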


\begin{proof}
	We will prove this statement by dividing it in two cases:\\
	\textit{Case (i):} When $h(v_k, \xi_k) \le 0 \implies (h(v_k, \xi_k))_+ = 0$, in this case we get: 
	\[ p_{v_k, \xi_k} \overset{\eqref{eq:alg2step2SMBA}}{=} 0. \]
	\textit{Case (ii):} When $h(v_k, \xi_k) > 0$, in this case $R_{v_k, \xi_k}$ can be positive, negative or zero. First, let us consider that $R_{v_k, \xi_k} > 0$, then we have:
	\begin{align}\label{eq:mid1SMBA}
		& 0 < h(v_k, \xi_k) \implies 0 < \frac{2 h(v_k, \xi_k)}{L_{\xi_k}} \implies -\frac{2 h(v_k, \xi_k)}{L_{\xi_k}} < 0 \nonumber \\
		&  \implies \frac{\|\nabla h(v_k, \xi_k)\|^2}{L^2_{\xi_k}} - \frac{2 h(v_k, \xi_k)}{L_{\xi_k}} < \frac{\|\nabla h(v_k, \xi_k)\|^2}{L^2_{\xi_k}}\nonumber\\
		& \implies R_{v_k, \xi_k} < \|v_k - c_{v_k, \xi_k}\|^2 \implies \sqrt{ R_{v_k, \xi_k}} < \|v_k - c_{v_k, \xi_k}\|.
	\end{align}
which shows   that $p_{v_k, \xi_k}> 0$ in this case. Finally, considering the case  $R_{v_k, \xi_k} \le 0 \implies (R_{v_k, \xi_k})_+ = 0$, thus from the expression in \eqref{eq:alg2step2SMBA} we get, $p_{v_k, \xi_k}> 0$. Thus combining both cases completes the proof. 
\end{proof}

\noindent Next, we provide some useful bounds on $p_{v_k, \xi_k}$, which will play a key role in our convergence analysis below.

\begin{lemma}
\label{lemma_bound_pSMBA}
Let Assumption \ref{assumption2smooth} hold.	Then, the following statements hold. 
 \begin{enumerate}[label=\roman*.]
		\item [(i)] We always have the following lower bound for $p_{v_k, \xi_k}$:
		\begin{align}\label{eq:bound_p1SMBA}
			( h(v_k, \xi_k))_+ L_{\xi_k} \le p_{v_k, \xi_k}.
		\end{align}
		\item [(ii)] When $h(v_k, \xi_k) > 0$,  we have the following tighter bounds for $ p_{v_k, \xi_k}$:
		\begin{align}\label{eq:bound_p2SMBA}
			\frac{\|\nabla h(v_k, \xi_k) \|^2}{2} \le p_{v_k, \xi_k} \begin{cases}
				\le \|\nabla h(v_k, \xi_k) \|^2, & \text{ if } R_{v_k, \xi_k}>0,\\
				=  h(v_k, \xi_k) L_{\xi_k}, & \text{ if } R_{v_k, \xi_k}\le 0.
			\end{cases}
		\end{align}
	\end{enumerate}
\end{lemma}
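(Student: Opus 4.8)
The plan is to treat the three regimes that already appear in the definition of $p_{v_k,\xi_k}$ separately, and to exploit one algebraic identity that collapses the rationalized denominator. Throughout, write $g := \nabla h(v_k,\xi_k)$, $L := L_{\xi_k}$, $h := h(v_k,\xi_k)$ and $R := R_{v_k,\xi_k}$, and recall from the definitions of the center and radius that $\|v_k - c_{v_k,\xi_k}\| = \|g\|/L$ and $R = \|g\|^2/L^2 - 2h/L$. The consequence I would use repeatedly is the identity $\|g\|^2 - L^2 R = 2hL$.

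First I would dispose of the degenerate regimes. If $h \le 0$, then $(h)_+ = 0$ forces $p_{v_k,\xi_k} = 0 = (h)_+ L$, so (i) holds with equality and (ii) is vacuous. If $h > 0$ and $R \le 0$, then $(R)_+ = 0$ makes the denominator equal to $1$, hence $p_{v_k,\xi_k} = hL$; this is exactly the equality asserted in the second branch of (ii), and combined with the equivalence $R \le 0 \iff \|g\|^2 \le 2hL \iff \|g\|^2/2 \le hL$ it simultaneously yields the lower bound $\|g\|^2/2 \le p_{v_k,\xi_k}$ in (ii) and the bound (i) (with equality).

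The substantive regime is $h > 0$ and $R > 0$. Here I would rationalize: writing the denominator as $(\|g\| - L\sqrt{R})/\|g\|$ (legitimate since $R>0$ together with $R = \|g\|^2/L^2 - 2h/L$ and $h>0$ forces $\|g\|>0$), multiply numerator and denominator of $p_{v_k,\xi_k}$ by $\|g\| + L\sqrt{R}$. The denominator becomes $\|g\|^2 - L^2 R = 2hL$ by the identity above, the factors $hL$ cancel, and I obtain the clean closed form
\[ p_{v_k,\xi_k} = \frac{\|g\|\,(\|g\| + L\sqrt{R})}{2} = \frac{\|g\|^2 + \|g\|\,L\sqrt{R}}{2}. \]
From this the lower bound $\|g\|^2/2 \le p_{v_k,\xi_k}$ is immediate since $\|g\|L\sqrt{R} \ge 0$; the upper bound $p_{v_k,\xi_k} \le \|g\|^2$ reduces to $L\sqrt{R} \le \|g\|$, i.e.\ $L^2 R \le \|g\|^2$, i.e.\ $R \le \|v_k - c_{v_k,\xi_k}\|^2$, which is precisely inequality \eqref{eq:mid1SMBA} established in the proof of Lemma \ref{lemma_p_defSMBA}; and $(h)_+ L = hL \le p_{v_k,\xi_k}$ follows from $hL = (\|g\|^2 - L^2 R)/2 \le (\|g\|^2 + \|g\|L\sqrt{R})/2$, valid because $-L^2 R < 0 \le \|g\|L\sqrt{R}$.

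I do not expect a genuine obstacle: the only nonroutine step is spotting the rationalization and the cancellation $\|g\|^2 - L^2 R = 2hL$, after which each inequality is a one-line consequence of the closed form. The single point demanding care is the division by $\|g\|$ in the upper-bound argument, but, as noted, $R>0$ with $h>0$ guarantees $\|g\|>0$, so the manipulation is justified.
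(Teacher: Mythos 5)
Your proof is correct and follows essentially the same route as the paper: the rationalization by the conjugate factor together with the identity $\|g\|^2 - L^2 R = 2hL$ is exactly the paper's computation (written there as $1 - R/\|v_k - c_{v_k,\xi_k}\|^2 = 2hL/\|g\|^2$), and the remaining bounds likewise rest on \eqref{eq:mid1SMBA}. The only cosmetic difference is that the paper dispatches part (i) in one line by noting the denominator in \eqref{eq:alg2step2SMBA} is at most $1$, whereas you verify it case by case.
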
 

\begin{proof}
	From \eqref{eq:alg2step2SMBA} \red{and from the expressions of  $R_{v_k, \xi_k}$ and $ c_{v_k, \xi_k}$},  we obtain:
	\[ p_{v_k, \xi_k} = \frac{(h(v_k, \xi_k))_+  L_{\xi_k}}{\left(1 - \frac{\sqrt{(R_{v_k, \xi_k})_+} }{ \|v_k - c_{v_k, \xi_k}\|}\right)} \ge (h(v_k, \xi_k))_+  L_{\xi_k}. \]
Hence, this proves the first statement \eqref{eq:bound_p1SMBA}. 

\medskip 
 
\noindent  Next, to prove \eqref{eq:bound_p2SMBA}, we divide it into the following two cases:  \\
	\textit{Case (i):} When $R_{v_k, \xi_k} > 0$ and $ h(v_k, \xi_k) > 0$, we have:
	\begin{align*}
		p_{v_k, \xi_k} & \overset{\eqref{eq:alg2step2SMBA}}{=} \frac{h(v_k, \xi_k) \cdot L_{\xi_k}}{\left(1 - \frac{\sqrt{R_{v_k, \xi_k}} }{ \|v_k - c_{v_k, \xi_k}\|}\right)} = \frac{h(v_k, \xi_k) \cdot L_{\xi_k}}{\left(1 - \frac{\sqrt{R_{v_k, \xi_k}} }{ \|v_k - c_{v_k, \xi_k}\|}\right)} \frac{\left(1 + \frac{\sqrt{R_{v_k, \xi_k}} }{ \|v_k - c_{v_k, \xi_k}\|}\right)}{\left(1 + \frac{\sqrt{R_{v_k, \xi_k}} }{ \|v_k - c_{v_k, \xi_k}\|}\right)}\\
		& = \frac{h(v_k, \xi_k) \cdot L_{\xi_k}}{\left(1 - \frac{R_{v_k,\xi_k}}{ \|v_k - c_{v_k, \xi_k}\|^2}\right)}\left(1 + \frac{\sqrt{ R_{v_k, \xi_k}}}{ \|v_k - c_{v_k, \xi_k}\|}\right) \\
		& = \frac{h(v_k, \xi_k) \cdot L_{\xi_k}}{\left(1 - 1 + \frac{2 h(v_k, \xi_k) \cdot L_{\xi_k}}{ \|\nabla h(v_k, \xi_k) \|^2}\right)} \left(1 + \frac{\sqrt{ R_{v_k, \xi_k}}}{ \|v_k - c_{v_k, \xi_k}\|}\right)\\
		& = \frac{ \|\nabla h(v_k, \xi_k) \|^2}{2} \left(1 + \frac{\sqrt{ R_{v_k, \xi_k}}}{ \|v_k - c_{v_k, \xi_k}\|}\right).
	\end{align*}
	From \eqref{eq:mid1SMBA}, we know that $\sqrt{ R_{v_k, \xi_k}} < \|v_k - c_{v_k, \xi_k}\|$, when $h(v_k, \xi_k) > 0$ and $ R_{v_k, \xi_k} > 0$. Hence, we get:
	\[ \frac{\|\nabla h(v_k, \xi_k) \|^2}{2} \le p_{v_k, \xi_k}   \overset{\eqref{eq:mid1SMBA}}{\le} \|\nabla h(v_k, \xi_k) \|^2. \]
 
\noindent 	\textit{Case (ii):} When $R_{v_k, \xi_k} \le 0$ and $ h(v_k, \xi_k) > 0$,  we have:
	\[ R_{v_k, \xi_k} \le 0 \implies \frac{\|\nabla h(v_k, \xi_k) \|^2}{2} \le h(v_k, \xi_k) L_{\xi_k} \overset{\eqref{eq:alg2step2SMBA}}{=} p_{v_k, \xi_k}. \]
 	Hence, combining both cases, we obtain the desired bounds \eqref{eq:bound_p2SMBA}. 
\end{proof}

\noindent In our convergence analysis below we also assume the following bounds to hold: 
\begin{align} \label{eq:bound_xksmooth}
\begin{split}
    & \|\nabla f(\Pi_{\mathcal{X}} (x_k))\|\le \bar{B}_f, \;\|\nabla f(x_k)\|\le B_f,   \\
    & \|\nabla h(v_k, \xi)\|\le B_h, \; |h(v_k, \xi)| \le M_h \;\;\; \forall \xi \in \Omega, \;\;  \forall  k\ge 0.
\end{split}
\end{align} 
{In \eqref{eq:bound_xksmooth}, the upper bounds related to the objective function $f$ always hold provided that e.g., $f$ is Lipschitz continuous or is continuously differentiable function and the sequence $(x_k)_{k \geq 0}$ generated by algorithm  SMBA is bounded. Moreover, the upper bounds related to the functional constraints $h(\cdot, \xi)$,  with $\xi \in \Omega$, always hold when the sequence $(v_k)_{k \geq 0}$ generated by  SMBA is bounded (recall that $h(\cdot, \xi)$,  with $\xi \in \Omega$, are assumed smooth functions).}  Note that the boundedness of a sequence generated by some algorithm is frequently employed in optimization, see e.g., \red{Lemma $4.4$ in \cite{BolTeb:14} and Lemma $3.4$ in \cite{CohTeb:22}}. {In our settings however, since we know that the sequences $(x_k,v_k)_{k \geq 0}$ generated by SMBA are always in the set $\mathcal{Y}$,}  their boundedness can be automatically  ensured if the set $\mathcal{Y}$ in our problem \eqref{eq:probsmooth} is considered compact.


\section{Convergence analysis of SMBA} 
In this section, we provide a convergence analysis for the SMBA algorithm under the basic assumptions from Section $2$, yielding new global convergence rates in optimality and feasibility criteria. \textit{Our proofs are novel, since we need to deal properly with infeasible iterates and with quadratic upper approximations of constraints that yield empty sets.}  We start our convergence analysis with some preliminary lemmas that are crucial in the derivation of the convergence rates. First, we prove some feasibility relation between  $x_{k} $ and $v_k$.\\

\begin{lemma}\label{lemma_feasSMBA}
Let the bounds from \eqref{eq:bound_xksmooth} hold. Then, we have the following relation true for the sequences generated by SMBA algorithm:
	\begin{align}\label{eq:lem_feasSMBA}
          \frac{1}{2}\dist^2(x_{k}, \mathcal{X}) - \alpha_k^2 B_f^2 \le \dist^2(v_k, \mathcal{X})\quad \forall  k\ge 0.
	\end{align}
\end{lemma}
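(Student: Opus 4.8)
The plan is to relate the two distances through the single gradient--projection step linking $x_k$ to $v_k$, using nothing more than the non-expansiveness of the Euclidean projection and the fact that the distance-to-a-set function is $1$-Lipschitz. The whole argument rests on two elementary observations and one application of Young's inequality; neither the convexity of $f$ nor any structural property of $\mathcal{X}$ beyond the inclusion $\mathcal{X} \subseteq \mathcal{Y}$ is required. I would not invoke Assumptions~\ref{assumption1smooth}--\ref{assumption3smooth} at all, only the gradient bound from \eqref{eq:bound_xksmooth}.

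First, I would record that every iterate $x_k$ lies in $\mathcal{Y}$: for $k \ge 1$ this is immediate from the update $x_k = \Pi_{\mathcal{Y}}(z_{k-1})$, and for $k = 0$ it holds by taking the starting point in $\mathcal{Y}$ (consistent with the remark that the projection steps keep $x_k, v_k \in \mathcal{Y}$). Consequently $\Pi_{\mathcal{Y}}(x_k) = x_k$, so the definition \eqref{eq:alg2step1SMBA} of $v_k$ together with the non-expansiveness of the projection onto the convex set $\mathcal{Y}$ yields
\[ \|v_k - x_k\| = \|\Pi_{\mathcal{Y}}(x_k - \alpha_k \nabla f(x_k)) - \Pi_{\mathcal{Y}}(x_k)\| \le \alpha_k \|\nabla f(x_k)\| \le \alpha_k B_f, \]
where the last inequality uses the bound $\|\nabla f(x_k)\| \le B_f$ from \eqref{eq:bound_xksmooth}.

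Next, I would exploit that the map $x \mapsto \dist(x, \mathcal{X})$ is $1$-Lipschitz: for any fixed $y \in \mathcal{X}$ the triangle inequality gives $\|x_k - y\| \le \|x_k - v_k\| + \|v_k - y\|$, and taking the infimum over $y \in \mathcal{X}$ produces
\[ \dist(x_k, \mathcal{X}) \le \|x_k - v_k\| + \dist(v_k, \mathcal{X}) \le \alpha_k B_f + \dist(v_k, \mathcal{X}). \]
Squaring both sides and applying $(a+b)^2 \le 2a^2 + 2b^2$ with $a = \alpha_k B_f$ and $b = \dist(v_k, \mathcal{X})$ turns this into $\dist^2(x_k, \mathcal{X}) \le 2\alpha_k^2 B_f^2 + 2\dist^2(v_k, \mathcal{X})$; dividing by two and rearranging produces exactly \eqref{eq:lem_feasSMBA}. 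The constants fall out automatically, so no tuning of the stepsize or the bound is needed.

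I expect no genuine obstacle in this argument; the single point that deserves care is the base case $k = 0$, since the algorithm nominally permits $x_0 \in \mathbb{R}^n$ rather than $x_0 \in \mathcal{Y}$. If $x_0 \notin \mathcal{Y}$ then $\Pi_{\mathcal{Y}}(x_0) \neq x_0$ and the estimate $\|v_0 - x_0\| \le \alpha_0 B_f$ can fail, because $B_f$ controls only the gradient and not the distance of $x_0$ to $\mathcal{Y}$. The cleanest remedy is to require the starting iterate to lie in $\mathcal{Y}$ (or to replace $x_0$ by $\Pi_{\mathcal{Y}}(x_0)$ before the first iteration), after which the bound on $\|v_k - x_k\|$, and hence the conclusion, holds uniformly for all $k \ge 0$.
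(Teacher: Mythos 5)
Your proof is correct and follows essentially the same route as the paper's: both bound $\|x_k - v_k\| \le \alpha_k B_f$ via nonexpansiveness of $\Pi_{\mathcal{Y}}$ (using $x_k = \Pi_{\mathcal{Y}}(x_k)$) and then split $\dist^2(x_k,\mathcal{X})$ with the inequality $\|a+b\|^2 \le 2\|a\|^2 + 2\|b\|^2$; your appeal to the $1$-Lipschitzness of $\dist(\cdot,\mathcal{X})$ is just a repackaging of the paper's choice of the test point $\Pi_{\mathcal{X}}(v_k)$. Your caveat about $k=0$ is a fair observation, since the paper's own argument also implicitly requires $x_0 \in \mathcal{Y}$ at the step where it writes $x_k = \Pi_{\mathcal{Y}}(x_k)$.
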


\begin{proof}
{From  the definition of projection (see \cite{NecSin:22}), we have:}
\[ {\|v - \Pi_\mathcal{X} (v) \|^2 \le \|v - y\|^2 \;\;\;  \forall v \in \mathbb{R}^n, \; y \in \mathcal{X}.}\]
{Hence, taking $v = x_k$ and $y = \Pi_{\mathcal{X}} (v_k) \in \mathcal{X}$}, we obtain:
\begin{align*}
    \dist^2(x_{k}, \mathcal{X}) & = \|x_{k} - \Pi_{\mathcal{X}} (x_{k})\|^2 \le \|x_{k} - \Pi_{\mathcal{X}} (v_k)\|^2 = \|x_{k} - {v_k + v_k} - \Pi_{\mathcal{X}} (v_k)\|^2 \\
    & \le  2 \|v_k - \Pi_{\mathcal{X}} (v_k) \|^2 + 2 {\|x_{k} - v_k\|^2 }\\
    & \overset{\eqref{eq:alg2step1SMBA}}{=} 2 \|v_k - \Pi_{\mathcal{X}} (v_k) \|^2 + 2 {\|\Pi_\mathcal{Y} (x_{k}) - \Pi_\mathcal{Y} (u_k)\|^2 } \\
    & \le 2\; \dist^2(v_k, \mathcal{X}) +  2\alpha_k^2 \|\nabla f(x_k)\|^2,
\end{align*}
{where the second inequality uses the relation $\| a + b \|^2 \le  2 \|a\|^2 + 2\|b\|^2$ for all $a,b \in \mathbb{R}^n$ and the last inequality uses that $x_k \in \mathcal{Y}, u_k = x_k - \alpha_k \nabla f(x_k)$ and the nonexpansiveness property of the projection operator, i.e., $\|\Pi_\mathcal{Y} (x_{k}) - \Pi_\mathcal{Y} (u_k)\|^2 \le \|x_k - u_k\|^2$}. After using the bound on the {(sub)}gradients of $f$ from \eqref{eq:bound_xksmooth} and rearranging the terms we obtain the claimed result.
\end{proof} 

\noindent Our next lemma establishes a relation between $ \|x_{k+1} - \bar{x}_{k+1}\|^2$ and  $\|v_k - \bar{v}_k\|^2$. \\ 

\begin{lemma}\label{lemma_main_reccSMBA}
Let Assumptions \ref{assumption2smooth} and \ref{assumption3smooth} hold and $h(\cdot, \xi)$, for all $\xi \in \Omega$, be convex functions. Additionally, assume that the bounds \eqref{eq:bound_xksmooth} are valid and \red{ $\beta \in (0,1)$}. Denote $ \mathcal{B}_h^2 = \max(B_h^2,  M_h \cdot \max_{\xi \in \Omega} L_\xi)$. Then, for any $k\ge 0$, we have the following recurrence true for sequences generated by SMBA:
\begin{align}\label{eq:main_reccSMBA}
    \mathbb{E}_{\xi_k}[\|x_{k+1} - \bar{x}_{k+1}\|^2] \le  \|v_k - \bar{v}_k\|^2 -  \frac{\beta  (1 - \beta)}{c \mathcal{B}_h^2} \dist^2 (x_k, \mathcal{X}) + \frac{2\beta (1 - \beta)}{ c \mathcal{B}_h^2}\alpha^2_k B_f^2.
\end{align}
\end{lemma}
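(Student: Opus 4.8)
The plan is to track the squared distance to the optimal set across one iteration, exploiting that the anchor point $\bar{v}_k = \Pi_{\mathcal{X}^*}(v_k)$ lies in $\mathcal{X}^* \subseteq \mathcal{X} \subseteq \mathcal{Y}$ and is therefore feasible for every constraint. First I would peel off the two projections onto $\mathcal{Y}$: since $\bar{v}_k \in \mathcal{Y}$ and $x_{k+1} = \Pi_{\mathcal{Y}}(z_k)$, nonexpansiveness gives $\|x_{k+1} - \bar{v}_k\|^2 \le \|z_k - \bar{v}_k\|^2$; and since $\bar{v}_k \in \mathcal{X}^*$, the definition of projection gives $\|x_{k+1} - \bar{x}_{k+1}\|^2 = \dist^2(x_{k+1}, \mathcal{X}^*) \le \|x_{k+1} - \bar{v}_k\|^2$. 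Hence it suffices to bound $\|z_k - \bar{v}_k\|^2$.

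Next I would expand $\|z_k - \bar{v}_k\|^2$ using the feasibility update \eqref{eq:alg2step3SMBA}. Writing $s_k = (h(v_k,\xi_k))_+ / p_{v_k,\xi_k}$ so that $z_k = v_k - \beta s_k \nabla h(v_k,\xi_k)$, the cross term is $-2\beta s_k \langle \nabla h(v_k,\xi_k), v_k - \bar{v}_k\rangle$. Convexity of $h(\cdot,\xi_k)$ together with $h(\bar{v}_k,\xi_k) \le 0$ yields $\langle \nabla h(v_k,\xi_k), v_k - \bar{v}_k\rangle \ge h(v_k,\xi_k)$, so when $h(v_k,\xi_k) > 0$ this term is at most $-2\beta (h(v_k,\xi_k))_+^2 / p_{v_k,\xi_k}$. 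For the quadratic term $\beta^2 s_k^2 \|\nabla h(v_k,\xi_k)\|^2$, the crucial ingredient is the lower bound $p_{v_k,\xi_k} \ge \|\nabla h(v_k,\xi_k)\|^2/2$ from Lemma \ref{lemma_bound_pSMBA} (valid in both the nonempty-ball and empty-ball subcases of \eqref{eq:bound_p2SMBA}), giving $\|\nabla h(v_k,\xi_k)\|^2 \le 2 p_{v_k,\xi_k}$ and bounding this term by $2\beta^2 (h(v_k,\xi_k))_+^2 / p_{v_k,\xi_k}$. Combining these, and noting that the feasible case $h(v_k,\xi_k) \le 0$ gives $z_k = v_k$ so the inequality holds trivially, I obtain the unified estimate $\|z_k - \bar{v}_k\|^2 \le \|v_k - \bar{v}_k\|^2 - 2\beta(1-\beta)(h(v_k,\xi_k))_+^2/p_{v_k,\xi_k}$; here the restriction $\beta \in (0,1)$ is exactly what keeps the coefficient $2\beta(1-\beta)$ positive.

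To convert this into the stated form I would invoke the upper bound on $p_{v_k,\xi_k}$ from Lemma \ref{lemma_bound_pSMBA}: in the nonempty-ball case $p_{v_k,\xi_k} \le \|\nabla h(v_k,\xi_k)\|^2 \le B_h^2$, and in the empty-ball case $p_{v_k,\xi_k} = h(v_k,\xi_k) L_{\xi_k} \le M_h \max_{\xi \in \Omega} L_\xi$, so in all cases $p_{v_k,\xi_k} \le \mathcal{B}_h^2$. Substituting into the denominator gives $\|x_{k+1} - \bar{x}_{k+1}\|^2 \le \|v_k - \bar{v}_k\|^2 - (2\beta(1-\beta)/\mathcal{B}_h^2)(h(v_k,\xi_k))_+^2$. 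Taking $\mathbb{E}_{\xi_k}[\cdot]$ and applying the linear regularity Assumption \ref{assumption3smooth}, rewritten as $\mathbb{E}_{\xi}[(h(v_k,\xi))_+^2] \ge \dist^2(v_k,\mathcal{X})/c$, produces a decrease of $(2\beta(1-\beta)/(c\mathcal{B}_h^2))\dist^2(v_k,\mathcal{X})$.

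The final step substitutes Lemma \ref{lemma_feasSMBA}, namely $\dist^2(v_k,\mathcal{X}) \ge \tfrac12 \dist^2(x_k,\mathcal{X}) - \alpha_k^2 B_f^2$, into the negative term; the factor $\tfrac12$ turns the coefficient into $\beta(1-\beta)/(c\mathcal{B}_h^2)$ and the $-\alpha_k^2 B_f^2$ contributes the additive error $2\beta(1-\beta)\alpha_k^2 B_f^2/(c\mathcal{B}_h^2)$, yielding exactly \eqref{eq:main_reccSMBA}. I expect the main obstacle to be handling the three branches of the update uniformly: the whole argument hinges on Lemma \ref{lemma_bound_pSMBA} supplying the two-sided sandwich $\|\nabla h(v_k,\xi_k)\|^2/2 \le p_{v_k,\xi_k} \le \mathcal{B}_h^2$ regardless of whether the approximating ball is empty, which is precisely what lets the empty-ball update $z_k = v_k - (\beta/L_{\xi_k})\nabla h(v_k,\xi_k)$ be absorbed into the same estimate as the genuine projection-onto-a-ball update.
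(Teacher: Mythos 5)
Your proposal is correct and follows essentially the same route as the paper: peel off the projections using nonexpansiveness and the optimality of $\bar{x}_{k+1}$, handle the cross term via convexity of $h(\cdot,\xi_k)$ and feasibility of $\bar{v}_k$, absorb the quadratic term using the lower bound $p_{v_k,\xi_k}\ge \|\nabla h(v_k,\xi_k)\|^2/2$, replace $p_{v_k,\xi_k}$ by $\mathcal{B}_h^2$ via the case-dependent upper bounds of Lemma \ref{lemma_bound_pSMBA}, and finish with Assumption \ref{assumption3smooth} and Lemma \ref{lemma_feasSMBA}. The only cosmetic difference is that you derive the unified bound $-2\beta(1-\beta)(h(v_k,\xi_k))_+^2/p_{v_k,\xi_k}$ first and defer the case split to the final substitution $p_{v_k,\xi_k}\le\mathcal{B}_h^2$, whereas the paper carries the three cases through in parallel; the content is identical.
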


\begin{proof} Recall that $\bar{x}$ denotes the projection of a vector $x$ onto the optimal set $\mathcal{X}^*$. Then, using the basic properties of the projection operator, we have:
\begin{align}\label{eq:dividingSMBA}
    & \|x_{k+1} - \bar{x}_{k+1}\|^2 \le \|\Pi_\mathcal{Y}(z_{k}) - \bar{v}_k\|^2  = \|\Pi_\mathcal{Y} (z_k)- \Pi_\mathcal{Y}(\bar{v}_k)\|^2 \le \|z_k - \bar{v}_k\|^2\nonumber\\
    & = \|z_k -v_k + v_k - \bar{v}_k\|^2 = \|v_k - \bar{v}_k\|^2 + 2 \langle z_k -v_k, v_k - \bar{v}_k \rangle + \|z_k -v_k \|^2\nonumber \\
    & \overset{\eqref{eq:alg2step3SMBA}}{=} \|v_k - \bar{v}_k\|^2 + 2 \beta \frac{ (h(v_k, \xi_k))_+}{p_{v_k, \xi_k}} \langle \nabla h(v_k, \xi_k), \bar{v}_k - v_k \rangle  + \| z_k -v_k \|^2 \nonumber \\
    & \le \|v_k - \bar{v}_k\|^2 + 2\beta \frac{ (h(v_k, \xi_k))_+}{p_{v_k, \xi_k}} \left( h(\bar{v}_k, \xi_k) - h(v_k, \xi_k)  \right) + \| z_k -v_k \|^2\nonumber\\
    & \overset{\eqref{eq:alg2step3SMBA}}{\le} \|v_k - \bar{v}_k\|^2 - 2\beta \frac{ (h(v_k, \xi_k))^2_+}{p_{v_k, \xi_k}}  + \beta^2 \frac{ (h(v_k, \xi_k))^2_+}{p^2_{v_k, \xi_k}}\| \nabla h(v_k, \xi_k)\|^2,
 \end{align}
where the last inequality follows from the fact that the constraints are feasible at $\bar{v}_k \in \mathcal{X}^* \subseteq \mathcal{X}$, i.e., $h(\bar{v}_k, \xi_k) \le 0$ and that $(a)_+ a = (a)_+^2$, for any scalar $a$. Furthermore, we proceed by considering the following \red{cases.}\\

\noindent \textit{Case (i):} When $h(v_{k}, \xi_k) \le 0 \implies (h(v_{k}, \xi_k))_+ = 0$. Then,  from \eqref{eq:p_defSMBA} we have $p_{ v_k, \xi_k} = 0$. Thus, the last two terms on the right-hand side of \eqref{eq:dividingSMBA} will disappear (recall that by convention $0/0 =0$). Hence, \eqref{eq:dividingSMBA} can be written as follows for any constant $\mathcal{B}_h >0$:
\begin{align}\label{eq:dividing_cases11SMBA}
    & \|x_{k+1} - \bar{x}_{k+1}\|^2 \le  \|v_k - \bar{v}_k\|^2 - \frac{ 2\beta (1 - \beta)}{\mathcal{B}_h^2} (h(v_k, \xi_k))_+^2 .
\end{align}

\noindent \textit{Case (ii):} When $h(v_k, \xi_k) > 0$ and $R_{v_k, \xi_k} > 0$, relation \eqref{eq:dividingSMBA} yields:
\begin{align}\label{eq:dividing_cases22SMBA}
    & \|x_{k+1} - \bar{x}_{k+1}\|^2 \overset{\eqref{eq:bound_p2SMBA}}{\le} \|v_k - \bar{v}_k\|^2 - 2\beta \frac{ (h(v_k, \xi_k))_+^2}{p_{ v_k, \xi_k}} + 2 \beta^2 \frac{ (h(v_k, \xi_k))_+^2}{p_{ v_k, \xi_k}} \nonumber\\
    & = \|v_k - \bar{v}_k\|^2  - 2\beta (1 - \beta)\frac{ (h(v_k, \xi_k))_+^2}{p_{ v_k, \xi_k}} \nonumber\\ 
    & \!\overset{\eqref{eq:bound_p2SMBA}}{\le} \|v_k - \bar{v}_k\|^2 - 2\beta (1 - \beta)\frac{ (h(v_k, \xi_k))_+^2}{\|\nabla h(v_k, \xi_k) \|^2} \nonumber\\ 
    &  \overset{\eqref{eq:bound_xksmooth}}{\le} \|v_k - \bar{v}_k\|^2 -  \frac{2\beta  (1 - \beta)}{B_h^2}(h(v_k, \xi_k))_+^2 .
\end{align}


\noindent \textit{Case (iii):} When $h(v_k, \xi_k) > 0$ and $R_{v_k, \xi_k} \le 0$, \red{from the definition of $R_{v_k, \xi_k}$} we have:
\[ p_{ v_k, \xi_k} = h(v_k, \xi_k) L_{\xi_k} \;\; \text{and } R_{v_k, \xi_k} \le 0 \implies \|\nabla h(v_k, \xi_k) \|^2 \le 2 h(v_k, \xi_k) L_{\xi_k}. \]
Using this in  \eqref{eq:dividingSMBA}, we get:
\begin{align}\label{eq:dividing_cases33SMBA}
    & \|x_{k+1} - \bar{x}_{k+1}\|^2 \le \|v_k - \bar{v}_k\|^2 - 2\beta \frac{ (h(v_k, \xi_k))_+^2}{p_{ v_k, \xi_k}} + 2\beta^2 \frac{ (h(v_k, \xi_k))_+^2}{p_{ v_k, \xi_k}} \nonumber\\
    & = \|v_k - \bar{v}_k\|^2 - 2\beta (1 - \beta)\frac{ (h(v_k, \xi_k))_+^2}{h(v_k, \xi_k) \cdot L_{\xi_k}} \nonumber\\ 
    &  \overset{\eqref{eq:bound_xksmooth}}{\le} \|v_k - \bar{v}_k\|^2 -  \frac{2\beta  (1 - \beta)}{M_h \cdot \max_{\xi \in \Omega} L_\xi}(h(v_k, \xi_k))_+^2. 
\end{align} 
Now, combining \eqref{eq:dividing_cases11SMBA}, \eqref{eq:dividing_cases22SMBA}  and \eqref{eq:dividing_cases33SMBA},  we get the following common recurrence:
\begin{align*} 
	& \|x_{k+1} - \bar{x}_{k+1}\|^2  \le  \|v_k - \bar{v}_k\|^2 -  \frac{2\beta  (1 - \beta)}{\mathcal{B}_h^2}(h(v_k, \xi_k))_+^2 ,
\end{align*}
where $ \mathcal{B}_h^2 = \max(B_h^2,  M_h \cdot \max_{\xi \in \Omega} L_\xi)$.  After taking expectation conditioned on $\xi_k$ and using the linear regularity condition from Assumption \ref{assumption3smooth}, we obtain:
\begin{align*} 
    & \mathbb{E}_{\xi_k}[\|x_{k+1} - \bar{x}_{k+1}\|^2] \le  \|v_k - \bar{v}_k\|^2 -  \frac{2\beta  (1 - \beta)}{\mathcal{B}_h^2}\mathbb{E}_{\xi_k}[(h(v_k, \xi_k))_+^2]\\
    & \overset{\eqref{eq:lin_regsmooth}}{\le} \|v_k - \bar{v}_k\|^2 -  \frac{2\beta  (1 - \beta)}{c \mathcal{B}_h^2} \dist^2 (v_k, \mathcal{X})\\
    & \overset{\eqref{eq:lem_feasSMBA}}{\le} \|v_k - \bar{v}_k\|^2 -  \frac{\beta  (1 - \beta)}{c \mathcal{B}_h^2} \dist^2 (x_k, \mathcal{X}) + \frac{2\beta (1 - \beta)}{ c \mathcal{B}_h^2}\alpha^2_k B_f^2.
 \end{align*}
This proves our statement. 
\end{proof}


\subsection{Convergence rates for SMBA under convex objective}
In this section, we consider that the objective function $f$ is convex, i.e., the Assumption \ref{assumption1smooth}$(ii)$ holds with $\mu  = 0$ and derive the convergence rates for SMBA. First, in the following lemma, we provide a main recurrence.\\

\begin{lemma}\label{lemma_vk_xkSMBA}
{Let Assumptions \ref{assumption1smooth} (with $\mu=0$), \ref{assumption2smooth} and \ref{assumption3smooth} hold and the bounds from \eqref{eq:bound_xksmooth} be valid. Moreover,  $h(\cdot, \xi)$ for all $\xi \in \Omega$ are convex functions.}  Further, let the stepsizes $\beta \in(0,1)$ and {$\alpha_k > 0$} for all $k \ge 0$. Then,  we have the following relation true for the sequences generated by SMBA algorithm:
\begin{align}\label{eq:xk_vkSMBA}
    \mathbb{E}[\|x_{k+1} - \bar{x}_{k+1}\|^2 \le& \mathbb{E}[\|x_k - \bar{x}_k\|^2] - 2 \alpha_k \mathbb{E}[\left( f(x_k) - f(\bar{x}_k) \right)] \\
    & \quad - \frac{\beta  (1 - \beta)}{ c \mathcal{B}_h^2}\mathbb{E}[\dist^2 (x_k, \mathcal{X})] + C_{\beta,c, \mathcal{B}_h} B_f^2 \alpha^2_k \quad k \ge 0, \nonumber
 \end{align}
where we denote $C_{\beta,c, \mathcal{B}_h} = \left( {1} + \frac{{2} \beta (1 - \beta)}{ c \mathcal{B}_h^2} \right)$.
\end{lemma}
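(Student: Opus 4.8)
The plan is to start from the recurrence already established in Lemma \ref{lemma_main_reccSMBA}, which bounds $\mathbb{E}_{\xi_k}[\|x_{k+1} - \bar{x}_{k+1}\|^2]$ in terms of $\|v_k - \bar{v}_k\|^2$ together with the feasibility term $\dist^2(x_k, \mathcal{X})$ and the $\alpha_k^2 B_f^2$ error. The only missing ingredient is therefore a bound on $\|v_k - \bar{v}_k\|^2$ expressed through $\|x_k - \bar{x}_k\|^2$ and the objective suboptimality $f(x_k) - f(\bar{x}_k)$. I would obtain this by a standard projected (sub)gradient descent argument applied to the first step \eqref{eq:alg2step1SMBA}, and then simply substitute.

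First I would exploit the fact that $\bar{v}_k = \Pi_{\mathcal{X}^*}(v_k)$ is, by definition, the closest point of $\mathcal{X}^*$ to $v_k$. Since $\bar{x}_k \in \mathcal{X}^*$, this immediately yields
\[
\|v_k - \bar{v}_k\|^2 = \dist^2(v_k, \mathcal{X}^*) \le \|v_k - \bar{x}_k\|^2 .
\]
This chaining through $\bar{x}_k$ is the step that links the two iterates, and it is the only place where a little care is required: $\bar{v}_k$ and $\bar{x}_k$ are projections of \emph{different} points onto $\mathcal{X}^*$, so one must invoke the optimality of $\bar{v}_k$ for $v_k$ rather than any property of $\bar{x}_k$.

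Next, recalling that $v_k = \Pi_\mathcal{Y}(u_k)$ with $u_k = x_k - \alpha_k \nabla f(x_k)$, and that $\bar{x}_k \in \mathcal{X}^* \subseteq \mathcal{Y}$ is a fixed point of $\Pi_\mathcal{Y}$, I would use nonexpansiveness of the projection to write $\|v_k - \bar{x}_k\|^2 \le \|u_k - \bar{x}_k\|^2$. Expanding the square, applying the convexity inequality from Assumption \ref{assumption1smooth} (with $\mu = 0$) in the form $\langle \nabla f(x_k), x_k - \bar{x}_k\rangle \ge f(x_k) - f(\bar{x}_k)$ to control the cross term, and bounding $\|\nabla f(x_k)\|^2 \le B_f^2$ via \eqref{eq:bound_xksmooth}, gives
\[
\|v_k - \bar{v}_k\|^2 \le \|x_k - \bar{x}_k\|^2 - 2\alpha_k\big(f(x_k) - f(\bar{x}_k)\big) + \alpha_k^2 B_f^2 .
\]

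Finally, substituting this bound into the recurrence of Lemma \ref{lemma_main_reccSMBA} and collecting the two $\alpha_k^2 B_f^2$ contributions into the single constant $C_{\beta,c,\mathcal{B}_h} = 1 + \frac{2\beta(1-\beta)}{c\mathcal{B}_h^2}$ reproduces the stated right-hand side under the conditional expectation $\mathbb{E}_{\xi_k}[\cdot]$; taking total expectation (noting that $v_k$, $x_k$, $\dist^2(x_k,\mathcal{X})$ and $f(x_k)-f(\bar{x}_k)$ are all determined before sampling $\xi_k$) then yields the claimed inequality. I do not anticipate any genuine obstacle beyond careful bookkeeping of the error terms; the whole argument is a routine composition of the projected-gradient estimate for the objective step with the already-proven feasibility recurrence.
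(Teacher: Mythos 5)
Your argument is correct and coincides with the paper's own proof: both bound $\|v_k-\bar v_k\|^2\le\|v_k-\bar x_k\|^2$ via optimality of the projection onto $\mathcal{X}^*$, pass to $\|u_k-\bar x_k\|^2$ by nonexpansiveness of $\Pi_{\mathcal{Y}}$, expand the square using convexity of $f$ and the bound $\|\nabla f(x_k)\|\le B_f$, and then substitute into the recurrence of Lemma \ref{lemma_main_reccSMBA} before taking full expectation. No gaps; the bookkeeping of the two $\alpha_k^2 B_f^2$ terms into $C_{\beta,c,\mathcal{B}_h}$ is exactly as in the paper.
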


\begin{proof}
From the basic properties of the projection {and recalling  that $\bar{x}_k \in \mathcal{X}^* \subseteq \mathcal{Y}$ and $u_k = x_k - \alpha_k \nabla f(x_k)$}, we get:
\begin{align*}
    & \|v_k  - \bar{v}_k\|^2 \le  \|v_k - \bar{x}_k\|^2 \; {\overset{\eqref{eq:alg2step1SMBA}}{=} \; \|\Pi_\mathcal{Y}(u_k)  - \Pi_\mathcal{Y}(\bar{x}_k)\|^2 \le \|u_k - \bar{x}_k \|^2} \\
    & = \|x_k - \bar{x}_k\|^2 - 2 \alpha_k    \langle \nabla f(x_k), x_k - \bar{x}_k \rangle + \alpha_k^2\|\nabla     f(x_k)\|^2\nonumber\\
    & \overset{\eqref{eq:bound_xksmooth}}{\le} \|x_k - \bar{x}_k\|^2 - 2 \alpha_k \left( f(x_k) - f(\bar{x}_k) \right) + B_f^2 \alpha_k^2,
\end{align*}
where the last inequality follows from the convexity of $f$. Now, using it in \eqref{eq:main_reccSMBA}, yields:
\begin{align*} 
    & \mathbb{E}_{\xi_k}[\|x_{k+1} - \bar{x}_{k+1}\|^2] \le  \|v_k - \bar{v}_k\|^2 -  \frac{\beta  (1 - \beta)}{c \mathcal{B}_h^2} \dist^2 (x_k, \mathcal{X}) + \frac{2\beta (1 - \beta)}{ c \mathcal{B}_h^2}\alpha^2_k B_f^2\\
    & \le \|x_k - \bar{x}_k\|^2 - 2 \alpha_k \left( f(x_k) - f(\bar{x}_k) \right) - \frac{\beta  (1 - \beta)}{ c \mathcal{B}_h^2}\dist^2 (x_k, \mathcal{X}) + \left( {1} + \frac{{2} \beta (1 - \beta)}{ c \mathcal{B}_h^2} \right) B_f^2 \alpha_k^2 \\
    & = \|x_k - \bar{x}_k\|^2 - 2 \alpha_k \left( f(x_k) - f(\bar{x}_k) \right) - \frac{\beta  (1 - \beta)}{ c \mathcal{B}_h^2}\dist^2 (x_k, \mathcal{X}) + C_{\beta,c, \mathcal{B}_h} B_f^2 \alpha^2_k,
 \end{align*}
where we denote $C_{\beta,c, \mathcal{B}_h} = \left( {1} + \frac{{2} \beta (1 - \beta)}{ c \mathcal{B}_h^2} \right)$. After taking full expectation we get the desired result. 
\end{proof}

\noindent Now we are ready to prove the main (sublinear) convergence rates of SMBA in the convex case. Let us define the following average sequences:
\[ \hat{x}_{k} = \frac{1}{S_k}  \sum_{t=0}^{k-1} \alpha_t x_{t},\;\; \text{ and }, \;\; \hat{w}_{k} = \frac{1}{S_k}  \sum_{t=0}^{k-1} \alpha_t \Pi_{\mathcal{X}} (x_{t}),  \]
where $S_k = \sum_{t=0}^{k-1}\alpha_t$ and note that $\hat{w}_k \in \mathcal{X}$, i.e., it is feasible.\\

\begin{theorem}\label{Th:convex_case2SMBA}
Let Assumptions \ref{assumption1smooth} {(with $\mu=0$)}, \ref{assumption2smooth} and \ref{assumption3smooth} hold and the bounds from \eqref{eq:bound_xksmooth} be valid. Moreover,  $h(\cdot, \xi)$ for all $\xi \in \Omega$ are convex functions. Then, choosing the stepsizes $\beta \in (0, 1)$ and {$\alpha_k > 0$} non-increasing, we get the following bounds  for the average sequence $\hat{x}_{k}$ in terms of optimality and feasibility violation:
\begin{align*}
    &\mathbb{E}[(f(\hat{x}_{k}) - f^*)] \le \frac{\|x_0 - \bar{x}_0\|^2}{2 S_k} + C_{\beta, c, \mathcal{B}_h}B_f^2 \frac{\sum_{t=0}^{k-1} \alpha_t^2}{2 S_k},\\
    & \mathbb{E}[\dist^2 (\hat{x}_{k}, \mathcal{X})] \le \frac{\alpha_0 c \mathcal{B}_h^2}{\beta  (1 - \beta)}\left( \frac{\|x_0 - \bar{x}_0\|^2}{S_k} + C_{\beta, C, \mathcal{B}_h} B_f^2 \frac{\sum_{t=0}^{k-1} \alpha_t^2}{S_k} \right).
\end{align*}
\end{theorem}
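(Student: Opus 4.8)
The plan is to derive both estimates from the single recurrence established in Lemma~\ref{lemma_vk_xkSMBA}. First I would observe that $\bar{x}_k = \Pi_{\mathcal{X}^*}(x_k) \in \mathcal{X}^*$, so that $f(\bar{x}_k) = f^*$, which turns \eqref{eq:xk_vkSMBA} into a recurrence whose right-hand side subtracts off the two nonnegative quantities $2\alpha_k\mathbb{E}[f(x_k) - f^*]$ and $\frac{\beta(1-\beta)}{c\mathcal{B}_h^2}\mathbb{E}[\dist^2(x_k,\mathcal{X})]$, together with the error term $C_{\beta,c,\mathcal{B}_h}B_f^2\alpha_k^2$. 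Writing $a_t = \mathbb{E}[\|x_t - \bar{x}_t\|^2]$, the recurrence has the form $a_{t+1} \le a_t - (\text{nonneg}_t) + C_{\beta,c,\mathcal{B}_h}B_f^2\alpha_t^2$, which telescopes cleanly over $t = 0, \dots, k-1$. Using $a_k \ge 0$ and $a_0 = \|x_0 - \bar{x}_0\|^2$ then yields the master inequality
\begin{align*}
2\sum_{t=0}^{k-1}\alpha_t\mathbb{E}[f(x_t) - f^*] + \frac{\beta(1-\beta)}{c\mathcal{B}_h^2}\sum_{t=0}^{k-1}\mathbb{E}[\dist^2(x_t,\mathcal{X})] \le \|x_0 - \bar{x}_0\|^2 + C_{\beta,c,\mathcal{B}_h}B_f^2\sum_{t=0}^{k-1}\alpha_t^2.
\end{align*}

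For the optimality bound I would discard the nonnegative feasibility sum, keep only the objective term, and divide by $2S_k$. Since $f$ is convex and $\hat{x}_k = \frac{1}{S_k}\sum_{t=0}^{k-1}\alpha_t x_t$ is a convex combination with weights summing to one, Jensen's inequality gives $\mathbb{E}[f(\hat{x}_k) - f^*] \le \frac{1}{S_k}\sum_{t=0}^{k-1}\alpha_t\mathbb{E}[f(x_t) - f^*]$, and combining the two delivers exactly the first claimed estimate.

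For the feasibility bound I would instead discard the objective sum and retain the distance sum, obtaining $\sum_{t=0}^{k-1}\mathbb{E}[\dist^2(x_t,\mathcal{X})] \le \frac{c\mathcal{B}_h^2}{\beta(1-\beta)}\bigl(\|x_0-\bar{x}_0\|^2 + C_{\beta,c,\mathcal{B}_h}B_f^2\sum_{t=0}^{k-1}\alpha_t^2\bigr)$. The bridging step is to compare $\dist^2(\hat{x}_k,\mathcal{X})$ against the feasible averaged point $\hat{w}_k = \frac{1}{S_k}\sum_{t=0}^{k-1}\alpha_t\Pi_{\mathcal{X}}(x_t)$, which lies in $\mathcal{X}$ because $\mathcal{X}$ is convex: using $\dist^2(\hat{x}_k,\mathcal{X}) \le \|\hat{x}_k - \hat{w}_k\|^2$ together with convexity of $\|\cdot\|^2$ yields $\dist^2(\hat{x}_k,\mathcal{X}) \le \frac{1}{S_k}\sum_{t=0}^{k-1}\alpha_t\dist^2(x_t,\mathcal{X})$. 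This is where the non-increasing stepsize hypothesis enters: bounding $\alpha_t \le \alpha_0$ converts the weighted sum into $\frac{\alpha_0}{S_k}\sum_{t=0}^{k-1}\dist^2(x_t,\mathcal{X})$, after which taking expectations and substituting the distance-sum bound produces the second estimate.

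I expect the feasibility bound to be the only nontrivial part: the optimality estimate is a routine averaging/Jensen argument, whereas the feasibility estimate requires introducing the auxiliary feasible sequence $\hat{w}_k$ and the correct use of monotonicity of the stepsizes to pass from a weighted to a uniformly weighted distance sum. Everything else is bookkeeping on the telescoped master inequality.
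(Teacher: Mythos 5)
Your proposal is correct and follows essentially the same route as the paper: telescope the recurrence of Lemma~\ref{lemma_vk_xkSMBA}, drop the respective nonnegative sum, and use Jensen together with the feasible averaged point $\hat{w}_k$ and $\alpha_t \le \alpha_0$ to pass to the feasibility bound. The only cosmetic difference is that the paper applies $\alpha_t/\alpha_0 \le 1$ to the distance terms before summing and averaging, whereas you apply it after forming the weighted average; the two orderings are equivalent and yield the identical constants.
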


\begin{proof}
Since  $\alpha_k$ is a decreasing sequence, we have $\alpha_k/ \alpha_0 \le 1$ for all $k>0$.  Summing the relation \eqref{eq:xk_vkSMBA} from $0$ to $k-1$,  we get:
\begin{align*} 
    & \mathbb{E}[\|x_{k} - \bar{x}_{k}\|^2] \le \|x_0 - \bar{x}_0\|^2 - 2 \sum_{t=0}^{k-1} \alpha_t \mathbb{E}[(f(x_t) - f(\bar{x}_t))] \\
    & \quad  -  \frac{\beta  (1 - \beta)}{\alpha_0 c \mathcal{B}_h^2} \sum_{t=0}^{k-1}  \alpha_t \mathbb{E}[\| x_{t} - \Pi_{\mathcal{X}} (x_t)\|^2] + C_{\beta, c, \mathcal{B}_h} B_f^2 \sum_{t=0}^{k-1} \alpha_t^2.
\end{align*}
Now, divide the whole inequality by $S_k$, we have:
\begin{align*} 
    & \red{\frac{1}{S_k} \mathbb{E}[\|x_{k} - \bar{x}_{k}\|^2] \le \frac{1}{S_k} \|x_0 - \bar{x}_0\|^2 - \frac{2}{S_k} \sum_{t=0}^{k-1} \alpha_t \mathbb{E}[(f(x_t) - f(\bar{x}_t))]} \\
    & \red{\quad  -  \frac{\beta  (1 - \beta)}{\alpha_0 S_k c \mathcal{B}_h^2} \sum_{t=0}^{k-1}  \alpha_t \mathbb{E}[\| x_{t} - \Pi_{\mathcal{X}} (x_t)\|^2] + C_{\beta, c, \mathcal{B}_h} B_f^2\frac{\sum_{t=0}^{k-1} \alpha_t^2}{S_k} }\\
    & \red{= \frac{1}{S_k} \|x_0 - \bar{x}_0\|^2 - 2 \mathbb{E}\left[ \frac{\sum_{t=0}^{k-1} \alpha_t }{S_k} (f(x_t) - f^*)\right]}\\
    & \red{\quad -  \frac{\beta  (1 - \beta)}{\alpha_0 c \mathcal{B}_h^2} \mathbb{E} \left[\frac{\sum_{t=0}^{k-1} \alpha_t}{S_k}  \| x_{t} - \Pi_{\mathcal{X}} (x_t)\|^2 \right] + C_{\beta, c, \mathcal{B}_h} B_f^2\frac{\sum_{t=0}^{k-1} \alpha_t^2}{S_k} }\\
    & \red{\le \frac{1}{S_k} \|x_0 - \bar{x}_0\|^2 - 2 \mathbb{E}\left[ (f(\hat{x}_{k}) - f^*)\right]}\\
    & \red{\quad -  \frac{\beta  (1 - \beta)}{\alpha_0 c \mathcal{B}_h^2} \mathbb{E} \left[ \| \hat{x}_{k} - \hat{w}_{k}\|^2 \right] + C_{\beta, c, \mathcal{B}_h} B_f^2\frac{\sum_{t=0}^{k-1} \alpha_t^2}{S_k} },
\end{align*}
\red{where the first inequality uses the linearity of the expectation operator and the second inequality is derived from the definitions of $\hat{x}_k$ and $\hat{w}_k$ and also from the convexity of $f$ and $\|\cdot \|^2$, i.e., $\frac{\sum_{t=0}^{k-1} \alpha_t}{S_k}  \| x_{t} - \Pi_{\mathcal{X}} (x_t)\|^2 \ge \left\| \frac{\sum_{t=0}^{k-1} \alpha_t}{S_k}  (x_{t} - \Pi_{\mathcal{X}} (x_t)) \right\|^2 = \| \hat{x}_{k} - \hat{w}_{k}\|^2$. Now, using the fact that $\mathbb{E}[\|x_{k} - \bar{x}_{k}\|^2] \ge 0$, we further obtain: }
\begin{align*} 
    & 2 \mathbb{E}[(f(\hat{x}_{k}) - f^*)] + \frac{\beta  (1 - \beta)}{\alpha_0 c \mathcal{B}_h^2}\mathbb{E}[\|\hat{x}_{k} - \hat{w}_{k}\|^2 ]\\
    & \le \frac{\|x_0- \bar{x}_0\|^2}{S_k} + C_{\beta, c, \mathcal{B}_h} B_f^2 \frac{\sum_{t=0}^{k-1} \alpha_t^2}{S_k}.
\end{align*}
We get the following bound  in expectation for the average sequence $\hat{x}_{k}$ in terms of optimality:
\begin{align*}
    & \mathbb{E}[(f(\hat{x}_{k}) - f^*)] \le \frac{\|x_0 - \bar{x}_0\|^2}{2 S_k} + C_{\beta, c, \mathcal{B}_h}B_f^2 \frac{\sum_{t=0}^{k-1} \alpha_t^2}{2 S_k}.
\end{align*}
Finally, \red{by using the fact that $\hat{w}_{k} \in \mathcal{X}$, we obtain} the following bound  in expectation for the average sequence $\hat{x}_{k}$ in terms of feasibility violation of the constraints:
\begin{align*}
    & \mathbb{E}[\dist^2 (\hat{x}_{k}, \mathcal{X})] \le \mathbb{E}[\|\hat{x}_{k} - \hat{w}_{k}\|^2 ]\\
    & \le \frac{\alpha_0 c \mathcal{B}_h^2}{\beta  (1 - \beta)}\left( \frac{\|x_0 - \bar{x}_0\|^2}{S_k} + C_{\beta, c, \mathcal{B}_h} B_f^2 \frac{\sum_{t=0}^{k-1} \alpha_t^2}{S_k} \right).
\end{align*}
Hence, the statements of the theorem are proved.
\end{proof}

\noindent \red{Now,  Theorem \ref{Th:convex_case2SMBA} yields (sublinear)  convergence rates for SMBA iterates  under convex objective if the non-increasing stepsize $\alpha_k$ satisfies e.g., the conditions: $\sum_{t=0}^{\infty}\alpha_t = \infty$ and $\sum_{t=0}^{\infty}\alpha_t^2 < \infty \text{ or } \sum_{t=0}^{k-1}\alpha_t^2 \sim \mathcal{O} (\ln{(k+1)})$ for all $k \geq 1$.} Let us discuss a few possible choices for such stepsize $\alpha_k$:
\begin{enumerate}
    \item  Consider  $\alpha_k = \red{\frac{\alpha_0}{\sqrt{k+2} \ln (k+2)}}\;\;  \forall k \ge 1$, with {$\alpha_0 >0$}.  Note that this choice  yields:
    \begin{align*}
        & \sum_{t=1}^{k+1} \alpha_t \ge \red{\frac{\alpha_0 (k+1)}{\sqrt{k+3} \ln (k+3)}}  \;\; \text{and } \;\; \sum_{t=1}^{k+1} \alpha_t^2 \le \frac{\alpha_0^2}{\red{\ln(3)}}.
    \end{align*}
     Thus, from Theorem \ref{Th:convex_case2SMBA}, we obtain the following sublinear convergence rates:
    \begin{align*}
        & \mathbb{E}[(f(\hat{x}_{k}) - f^*)] \le \mathcal{O}\left( \frac{\ln (k+3)}{\sqrt{k+1}} \right),  \;\;  \mathbb{E}[\dist^2 (\hat{x}_{k}, \mathcal{X})] \le \mathcal{O}\left(\frac{\ln (k+3)}{\sqrt{k+1}} \right).
    \end{align*}
    \item Other choice  is $\alpha_k = \frac{\alpha_0}{\sqrt{k}} \;\;  \forall k \ge 1$ with {$\alpha_0 >0$}.  This choice  gives us:
    \begin{align*}
        & \sum_{t=1}^{k+1} \alpha_t \ge \alpha_0 \sqrt{k+1}  \;\; \text{and } \;\;  \sum_{t=1}^{k+1} \alpha_t^2 \le \mathcal{O} (\alpha_0^2 \ln (k+1)).
    \end{align*}
    Thus, we have the following  sublinear convergence rates from Theorem \ref{Th:convex_case2SMBA}:
    \begin{align*}
        & \mathbb{E}[(f(\hat{x}_{k}) - f^*] \le \mathcal{O}\left(\frac{1}{\sqrt{k+1}} + \frac{\ln (k+1)}{\sqrt{k+1}} \right), \\
        & \mathbb{E}[\dist^2 (\hat{x}_{k}, \mathcal{X})] \le \mathcal{O}\left(\frac{1}{\sqrt{k+1}} + \frac{\ln (k+1)}{\sqrt{k+1}} \right).
    \end{align*}
\end{enumerate} 
\noindent It is worth mentioning that although both previous choices for the stepsize $\alpha_k$ give the same theoretical rates, in our practical implementations we observed that the first choice, i.e., $\alpha_k = \frac{\alpha_0}{L_f \sqrt{k+2} \ln (k+2)}$,  performs better than the second choice $\alpha_k = \frac{\alpha_0}{\sqrt{k}}$.


\subsection{Convergence rates for SMBA under strong convex objective}
In this section, we consider that the objective function $f$ satisfies the strong convexity condition given in  \eqref{as:strg_conv_fsmooth} {(with $\mu >0$)} and obtain the convergence rates for SMBA. Note that due to the strong convexity assumption on $f$, we have a unique global minima for \eqref{eq:probsmooth}, denoted as $x^*$, i.e., in relation \eqref{eq:main_reccSMBA} we have $\bar{x}_{k+1} = \bar{v}_k = x^*$ for all $k$. First, we provide a main recurrence and use it later to prove convergence rates for the SMBA algorithm in these settings.

\begin{lemma}\label{lemma_main_recc_strconvSMBA}
Let Assumptions \ref{assumption1smooth} {(with $\mu>0$)}, \ref{assumption2smooth} and \ref{assumption3smooth} hold  and the bounds from \eqref{eq:bound_xksmooth} be valid. Also $h(\cdot, \xi)$ for all $\xi \in \Omega$ be convex functions. Choose $\beta \in (0,1)$ and  {$\alpha_k = \frac{2}{\mu (k+1)} > 0$ for all $k\ge 0$}. Then, we have the following recurrence true for the iterates generated by SMBA algorithm:
        \begin{align}
		&(k+1)^2\mathbb{E}[\|x_{k+1} - x^*\|^2] \le k^2\mathbb{E}[\|x_k - x^*\|^2] +  \frac{4}{\mu^2}  \hat{C}_{\beta, B_f, \mathcal{B}_h} \label{eq:main_recc_strconvSMBA}\\
        & \; - 2(k+1) \mathbb{E}[\|\Pi_{\mathcal{X}}(x_k) - \bar{x}_k\|^2]-  \frac{\beta  (1 - \beta) (k+1)^2}{2 c \mathcal{B}_h^2} \mathbb{E}[\dist^2 (x_{k}, \mathcal{X})] \nonumber,
	\end{align}
 where $\hat{C}_{\beta, B_f, \mathcal{B}_h} = \left( {\frac{2 c \mathcal{B}_h^2}{\beta (1 - \beta)}\bar{B}_f^2 + \left(1 + \frac{2\beta (1 - \beta)}{c \mathcal{B}_h^2}\right)  B_f^2} \right)$.
\end{lemma}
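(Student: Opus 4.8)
The plan is to start from the one-step recurrence of Lemma~\ref{lemma_main_reccSMBA}, which under strong convexity simplifies because the minimizer is unique, so $\bar{x}_{k+1}=\bar{v}_k=x^*$ and the recurrence reads $\mathbb{E}_{\xi_k}[\|x_{k+1}-x^*\|^2]\le \|v_k-x^*\|^2 - \frac{\beta(1-\beta)}{c\mathcal{B}_h^2}\dist^2(x_k,\mathcal{X}) + \frac{2\beta(1-\beta)}{c\mathcal{B}_h^2}\alpha_k^2 B_f^2$. The first task is to bound $\|v_k-x^*\|^2$ in terms of $\|x_k-x^*\|^2$. Since $v_k=\Pi_{\mathcal{Y}}(u_k)$ with $u_k=x_k-\alpha_k\nabla f(x_k)$ and $x^*\in\mathcal{X}\subseteq\mathcal{Y}$, nonexpansiveness of the projection gives $\|v_k-x^*\|^2\le\|u_k-x^*\|^2=\|x_k-x^*\|^2-2\alpha_k\langle\nabla f(x_k),x_k-x^*\rangle+\alpha_k^2\|\nabla f(x_k)\|^2$, and the last term is bounded by $\alpha_k^2 B_f^2$ via \eqref{eq:bound_xksmooth}.

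The crux — and the place where infeasibility must be handled carefully — is lower bounding $\langle\nabla f(x_k),x_k-x^*\rangle$. Applying the strong convexity inequality \eqref{as:strg_conv_fsmooth} with the pair $(x_k,x^*)\in\mathcal{Y}\times\mathcal{Y}$ yields $\langle\nabla f(x_k),x_k-x^*\rangle\ge f(x_k)-f(x^*)+\frac{\mu}{2}\|x_k-x^*\|^2$. Here lies the obstacle: $x_k$ need \emph{not} be feasible, so $f(x_k)-f(x^*)$ can be negative and the quadratic growth \eqref{eq:QGsmooth} cannot be invoked at $x_k$ directly. To route around this I would introduce the feasible point $\tilde{x}_k:=\Pi_{\mathcal{X}}(x_k)\in\mathcal{X}$ and write $f(x_k)-f(x^*)=\big(f(x_k)-f(\tilde{x}_k)\big)+\big(f(\tilde{x}_k)-f(x^*)\big)$. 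Convexity of $f$ at $\tilde{x}_k$ gives $f(x_k)-f(\tilde{x}_k)\ge\langle\nabla f(\tilde{x}_k),x_k-\tilde{x}_k\rangle\ge-\bar{B}_f\,\dist(x_k,\mathcal{X})$ by Cauchy--Schwarz and the bound $\|\nabla f(\Pi_{\mathcal{X}}(x_k))\|\le\bar{B}_f$ from \eqref{eq:bound_xksmooth}, while quadratic growth \eqref{eq:QGsmooth} applied to the feasible $\tilde{x}_k$ gives $f(\tilde{x}_k)-f(x^*)\ge\frac{\mu}{2}\|\tilde{x}_k-x^*\|^2$. Combining these produces $\langle\nabla f(x_k),x_k-x^*\rangle\ge\frac{\mu}{2}\|x_k-x^*\|^2+\frac{\mu}{2}\|\tilde{x}_k-x^*\|^2-\bar{B}_f\,\dist(x_k,\mathcal{X})$.

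Substituting this back gives $\|v_k-x^*\|^2\le(1-\alpha_k\mu)\|x_k-x^*\|^2-\alpha_k\mu\|\tilde{x}_k-x^*\|^2+2\alpha_k\bar{B}_f\,\dist(x_k,\mathcal{X})+\alpha_k^2 B_f^2$, and inserting this into the Lemma~\ref{lemma_main_reccSMBA} recurrence leaves a troublesome \emph{positive} cross term $2\alpha_k\bar{B}_f\,\dist(x_k,\mathcal{X})$, linear in the distance. I would absorb it with Young's inequality $2\alpha_k\bar{B}_f\,\dist(x_k,\mathcal{X})\le\frac{2c\mathcal{B}_h^2}{\beta(1-\beta)}\alpha_k^2\bar{B}_f^2+\frac{\beta(1-\beta)}{2c\mathcal{B}_h^2}\dist^2(x_k,\mathcal{X})$, choosing the splitting parameter precisely so that the second piece cancels exactly half of the negative feasibility term $-\frac{\beta(1-\beta)}{c\mathcal{B}_h^2}\dist^2(x_k,\mathcal{X})$, leaving the residual $-\frac{\beta(1-\beta)}{2c\mathcal{B}_h^2}\dist^2(x_k,\mathcal{X})$. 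Collecting all $\alpha_k^2$ coefficients then reconstitutes exactly the constant $\hat{C}_{\beta,B_f,\mathcal{B}_h}=\frac{2c\mathcal{B}_h^2}{\beta(1-\beta)}\bar{B}_f^2+\big(1+\frac{2\beta(1-\beta)}{c\mathcal{B}_h^2}\big)B_f^2$, yielding $\mathbb{E}_{\xi_k}[\|x_{k+1}-x^*\|^2]\le(1-\alpha_k\mu)\|x_k-x^*\|^2-\alpha_k\mu\|\tilde{x}_k-x^*\|^2-\frac{\beta(1-\beta)}{2c\mathcal{B}_h^2}\dist^2(x_k,\mathcal{X})+\alpha_k^2\hat{C}_{\beta,B_f,\mathcal{B}_h}$.

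Finally, I would specialize the stepsize $\alpha_k=\tfrac{2}{\mu(k+1)}$, so that $1-\alpha_k\mu=\tfrac{k-1}{k+1}$, $\alpha_k\mu=\tfrac{2}{k+1}$ and $\alpha_k^2=\tfrac{4}{\mu^2(k+1)^2}$, multiply the whole inequality by $(k+1)^2$, use the elementary bound $(k+1)(k-1)=k^2-1\le k^2$ on the leading coefficient, identify $\|\tilde{x}_k-x^*\|^2=\|\Pi_{\mathcal{X}}(x_k)-\bar{x}_k\|^2$ (since $\bar{x}_k=x^*$), and take full expectation via the tower property. This delivers precisely \eqref{eq:main_recc_strconvSMBA}. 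The main difficulty throughout is the infeasibility handling in the second paragraph: because the iterates $x_k$ may violate the constraints, the quadratic growth must be transferred to the feasible projection $\tilde{x}_k$ at the cost of the $\bar{B}_f\,\dist(x_k,\mathcal{X})$ defect, whose subsequent cancellation against the feasibility decrease of Lemma~\ref{lemma_main_reccSMBA} is what makes the telescoping recurrence possible.
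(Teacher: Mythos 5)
Your proposal is correct and follows essentially the same route as the paper's proof: pass from $x_k$ to the feasible projection $\Pi_{\mathcal{X}}(x_k)$ via convexity, apply quadratic growth there, absorb the resulting $\bar{B}_f\,\dist(x_k,\mathcal{X})$ cross term with Young's inequality using exactly the splitting $\eta=\tfrac{\beta(1-\beta)}{2c\mathcal{B}_h^2}$ so that half of the feasibility decrease from Lemma~\ref{lemma_main_reccSMBA} survives, and then weight by $(k+1)^2$. The only cosmetic difference is that you apply Cauchy--Schwarz before Young's inequality on scalars, whereas the paper uses the vector form $2\langle a,b\rangle\le\tfrac{1}{\eta}\|a\|^2+\eta\|b\|^2$ directly; the resulting bounds and the constant $\hat{C}_{\beta,B_f,\mathcal{B}_h}$ are identical.
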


\begin{proof} 

{ From the basic properties of the projection and recalling  that $x^* \in \mathcal{X}^* \subseteq \mathcal{Y}$ and $u_k = x_k - \alpha_k \nabla f(x_k)$}, we have:
\begin{align*}
    &\|v_k - x^*\|^2 \; {\overset{\eqref{eq:alg2step1SMBA}}{=} \;  \|\Pi_\mathcal{Y} (u_k) - \Pi_\mathcal{Y} (x^*)\|^2 \le \|u_k - x^*\|^2} \nonumber\\
    & = \|x_k - x^*\|^2 - 2 \alpha_k    \langle \nabla f(x_k), x_k - x^* \rangle + \alpha_k^2\|\nabla     f(x_k)\|^2\nonumber\nonumber\\
    & \overset{\eqref{as:strg_conv_fsmooth}}{\le} (1 - \mu \alpha_k) \|x_k - x^*\|^2 - 2 \alpha_k \left( f({x_k}) - f(x^*) \right) + \alpha_k^2\|\nabla     f(x_k)\|^2 \nonumber\\
    & \le (1 - \mu \alpha_k) \|x_k - x^*\|^2 - 2 \alpha_k \left( f(\Pi_\mathcal{X}(x_k)) + \langle \nabla f(\Pi_\mathcal{X}(x_k)), {x_k} - \Pi_\mathcal{X}(x_k) \rangle - f(x^*) \right)\nonumber\\
    & \quad + \alpha_k^2\|\nabla     f(x_k)\|^2 \nonumber \\
    & \le (1 - \mu \alpha_k) \|x_k - x^*\|^2 - 2 \alpha_k \left( f(\Pi_\mathcal{X}(x_k)) - f(x^*) \right)\nonumber\\
    & \qquad + \frac{{1}}{\eta} \|\nabla f(\Pi_{\mathcal{X}} (x_k))\|^2 \alpha_k^2 + \eta \|x_k - \Pi_\mathcal{X}(x_k)\|^2 + \alpha_k^2\|\nabla     f(x_k)\|^2 \nonumber\\
    & \overset{\eqref{eq:bound_xksmooth}}{\le} (1 - \mu \alpha_k) \|x_k - x^*\|^2 - 2 \alpha_k \left( f(\Pi_\mathcal{X}(x_k)) - f(x^*) \right) + \eta \|x_k - \Pi_\mathcal{X}(x_k)\|^2 \\
    & \quad \qquad+ \left(\frac{{1}}{\eta} \bar{B}_f^2 +  B_f^2\right) \alpha_k^2,\nonumber
\end{align*}

\noindent where third inequality uses the convexity of $f$ and the fourth inequality follows from the identity $2\langle a, b \rangle \le \frac{1}{\eta} \|a\|^2 + \eta \|b\|^2$, for all $a,b \in \mathbb{R}^n$ and $ \eta > 0$. Now, choosing $\eta = \frac{\beta (1 - \beta)}{2 c \mathcal{B}_h^2}$ and using it in \eqref{eq:main_reccSMBA}, we get:
\begin{align*}
     & \mathbb{E}_{\xi_k}[\|x_{k+1} - \bar{x}_{k+1}\|^2] \le (1 - \mu \alpha_k) \|x_k - x^*\|^2 - 2 \alpha_k \left( f(\Pi_\mathcal{X}(x_k))- f(x^*) \right)  \\
     & \quad - \frac{\beta (1 - \beta)}{2 c \mathcal{B}_h^2} \dist^2 (x_{k}, \mathcal{X}) + \left( {\frac{2 c \mathcal{B}_h^2}{\beta (1 - \beta)}\bar{B}_f^2 + \left(1 + \frac{2\beta (1 - \beta)}{c \mathcal{B}_h^2}\right)  B_f^2} \right)\alpha_k^2.
\end{align*}
Taking full expectation and denoting $\hat{C}_{\beta, B_f, \mathcal{B}_h} = \left( {\frac{2 c \mathcal{B}_h^2}{\beta (1 - \beta)}\bar{B}_f^2 + \left(1 + \frac{2\beta (1 - \beta)}{c \mathcal{B}_h^2}\right)  B_f^2} \right)$, we obtain:
\begin{align} \label{eq:waySMBA}
    & \mathbb{E}[\|x_{k+1} - x^*\|^2]  \le  (1 - \mu \alpha_k)\mathbb{E}[\|x_k - x^*\|^2] - 2 \alpha_k \mathbb{E}[(f(\Pi_\mathcal{X}(x_k)) - f(x^*))] \nonumber\\
    & \qquad  -   \frac{\beta (1 - \beta)}{2 c \mathcal{B}_h^2} \mathbb{E} [\dist^2 (x_{k}, \mathcal{X})]+ \hat{C}_{\beta, B_f, \mathcal{B}_h}\alpha^2_k \nonumber\\
    & \overset{\eqref{eq:QGsmooth}}{\le} (1 - \mu \alpha_k)\mathbb{E}[\|x_k - x^*\|^2] - \mu \alpha_k \mathbb{E}[\|\Pi_{\mathcal{X}}(x_k) - x^*\|^2] \\
    & \qquad -  \frac{\beta (1 - \beta)}{2 c \mathcal{B}_h^2} \mathbb{E} [\dist^2 (x_{k}, \mathcal{X})] + \hat{C}_{\beta, B_f, \mathcal{B}_h}\alpha^2_k.\nonumber
\end{align} 
From the choice of $\alpha_k$, i.e., $\alpha_k = \frac{2}{\mu(k+1)}$,  \eqref{eq:waySMBA} yields:\vspace{-0.3cm}
\begin{align*}
    \mathbb{E}[\|x_{k+1} - x^*\|^2]  & \le \left(1 -\frac{2}{k+1}\right)\mathbb{E}[\|x_k - x^*\|^2] - \frac{2}{(k+1)} \mathbb{E}[\|\Pi_{\mathcal{X}}(x_k) - x^*\|^2] \nonumber\\[-3pt]
    & \qquad  -  \frac{\beta  (1 - \beta)}{2 c \mathcal{B}_h^2} \mathbb{E}[\dist^2 (x_{k}, \mathcal{X})] +  \frac{4}{\mu^2 (k+1)^2} \hat{C}_{\beta, B_f, \mathcal{B}_h}.
\end{align*}
After multiplying the whole inequality by $(k+1)^2$, and using  $k^2 - 1\le k^2$, we obtain:
\vspace{-0.3cm}
\begin{align*}
    & (k+1)^2\mathbb{E}[\|x_{k+1} - x^*\|^2] \le k^2 \mathbb{E}[\|x_k - x^*\|^2] + \frac{4}{\mu^2} \hat{C}_{\beta, B_f, \mathcal{B}_h}\\
    & \quad - 2(k+1) \mathbb{E}[\|\Pi_{\mathcal{X}}(x_k) - x^*\|^2]-  \frac{\beta  (1 - \beta) (k+1)^2}{2 c \mathcal{B}_h^2} \mathbb{E}[\dist^2 (x_{k}, \mathcal{X})].
\end{align*}
Hence, we get the desired result \eqref{eq:main_recc_strconvSMBA}.
\end{proof}

\noindent Before deriving the convergence rates, let us define the sum: \vspace{-0.3cm}
\begin{align*}
	{\bar{S}_k = \sum_{t=0}^{k-1} (t+1)^2 \sim \mathcal{O} (k^3 ) \quad \forall k \geq 0,}
\end{align*}
and  the corresponding average sequences: \vspace{-0.3cm}
\begin{align*}
	&{\hat{x}_k = \frac{\sum_{t=0}^{k-1} (t+1)^2 x_{t}}{\bar{S}_k},\;\;	
	\hat{w}_k = \frac{1}{\bar{S}_k} \sum_{t=0}^{k-1} (t+1)^2 \Pi_{\mathcal X}(x_{t}) \in \mathcal{X}.} 
\end{align*}

\noindent The following theorem provides the convergence rates for the SMBA algorithm when the objective is strongly convex.\\

\begin{theorem}\label{Th:strconvex_caseSMBA}
   Let Assumptions \ref{assumption1smooth} (with $\mu>0$), \ref{assumption2smooth} and \ref{assumption3smooth} hold  and the bounds from \eqref{eq:bound_xksmooth} be valid. Further, choose  stepsizes $\beta \in (0, 1)$ and {$\alpha_k = \frac{2}{\mu (k+1)} > 0$, for all $k\ge 0$}. Then, we have the following (sublinear) convergence rates for the average sequence $\hat{x}_{k}$ in terms of optimality and feasibility violation for any $k \ge 0$:
\begin{align*}
    & \mathbb{E}[\|\hat{x}_{k} - x^*\|^2 ] \le \mathcal{O}\left( \frac{\hat{C}_{\beta,B_f,\mathcal{B}_h}}{\mu^2 (k+1)} + \frac{c \mathcal{B}_h^2 \hat{C}_{\beta,B_f,\mathcal{B}_h}}{\beta  (1 - \beta) \mu^2 (k+1)^2} \right),\\
    & \mathbb{E}[\dist^2 (\hat{x}_{k}, \mathcal{X})] \le \mathcal{O} \left(\frac{c \mathcal{B}_h^2 \hat{C}_{\beta, B_f,\mathcal{B}_h}}{\beta  (1 - \beta) \mu^2 (k+1)^2 } \right).
\end{align*}
\end{theorem}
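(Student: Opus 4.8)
The plan is to telescope the one-step recurrence \eqref{eq:main_recc_strconvSMBA} and then feed the resulting estimates into the weighted averages $\hat{x}_k$ and $\hat{w}_k$ through Jensen's inequality. Writing $\hat{C}:=\hat{C}_{\beta,B_f,\mathcal{B}_h}$ for brevity and recalling that $\bar{x}_k=x^*$ in the strongly convex case, I would note that the coefficient $k^2\mathbb{E}[\|x_k-x^*\|^2]$ vanishes at $k=0$. Summing \eqref{eq:main_recc_strconvSMBA} over $t=0,\dots,k-1$ collapses the left-hand side telescopically, leaving
\begin{align*}
& k^2\,\mathbb{E}[\|x_k - x^*\|^2] + 2\sum_{t=0}^{k-1}(t+1)\,\mathbb{E}[\|\Pi_{\mathcal{X}}(x_t) - x^*\|^2] \\
& \quad + \frac{\beta(1-\beta)}{2c\mathcal{B}_h^2}\sum_{t=0}^{k-1}(t+1)^2\,\mathbb{E}[\dist^2(x_t, \mathcal{X})] \le \frac{4k}{\mu^2}\,\hat{C}.
\end{align*}
Since every term on the left is nonnegative, I would read off two facts simultaneously: discarding the two sums yields the \emph{per-iterate} bound $\mathbb{E}[\|x_k-x^*\|^2]\le 4\hat{C}/(\mu^2 k)$ for $k\ge1$, and discarding the first two terms yields the weighted feasibility bound $\sum_{t=0}^{k-1}(t+1)^2\mathbb{E}[\dist^2(x_t,\mathcal{X})]\le 8c\mathcal{B}_h^2 k\,\hat{C}/(\beta(1-\beta)\mu^2)$.

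For the feasibility rate, since $\hat{w}_k\in\mathcal{X}$ I would start from $\dist^2(\hat{x}_k,\mathcal{X})\le\|\hat{x}_k-\hat{w}_k\|^2$ and then use convexity of $\|\cdot\|^2$ to pull the square inside the weighted average, obtaining $\|\hat{x}_k-\hat{w}_k\|^2\le \bar{S}_k^{-1}\sum_{t=0}^{k-1}(t+1)^2\dist^2(x_t,\mathcal{X})$. Taking expectations, inserting the weighted feasibility bound above, and using $\bar{S}_k\sim\mathcal{O}(k^3)$, delivers the claimed rate $\mathbb{E}[\dist^2(\hat{x}_k,\mathcal{X})]\le\mathcal{O}(c\mathcal{B}_h^2\hat{C}/(\beta(1-\beta)\mu^2 k^2))$.

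For the optimality rate I would split $\|\hat{x}_k-x^*\|^2\le 2\|\hat{x}_k-\hat{w}_k\|^2+2\|\hat{w}_k-x^*\|^2$. The first piece is already governed by the feasibility computation and supplies the $\mathcal{O}(1/k^2)$ term. For the second piece, Jensen gives $\|\hat{w}_k-x^*\|^2\le\bar{S}_k^{-1}\sum_{t=0}^{k-1}(t+1)^2\|\Pi_{\mathcal{X}}(x_t)-x^*\|^2$; here I would invoke nonexpansiveness $\|\Pi_{\mathcal{X}}(x_t)-x^*\|\le\|x_t-x^*\|$ followed by the per-iterate bound $\mathbb{E}[\|x_t-x^*\|^2]\le 4\hat{C}/(\mu^2 t)$, reducing the estimate to evaluating the elementary series $\sum_{t=1}^{k-1}(t+1)^2/t\sim\mathcal{O}(k^2)$ and dividing by $\bar{S}_k\sim\mathcal{O}(k^3)$. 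This contributes the $\mathcal{O}(\hat{C}/(\mu^2 k))$ term, and adding the two pieces reproduces the stated optimality bound.

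The main obstacle is a \emph{weight mismatch}: the telescoped inequality only controls $\sum(t+1)\mathbb{E}[\|\Pi_{\mathcal{X}}(x_t)-x^*\|^2]$ with weight $t+1$, whereas the average $\hat{w}_k$ carries weight $(t+1)^2$, so the summed bound cannot be substituted directly. The resolution I rely on is precisely the two-stage argument above: first extract the cheap per-iterate decay $\mathbb{E}[\|x_t-x^*\|^2]=\mathcal{O}(1/t)$ from the telescoped inequality, and only then insert it term-by-term into the $(t+1)^2$-weighted average, where the series $\sum(t+1)^2/t$ remains of order $k^2$. Care is additionally needed to handle the $t=0$ contribution (where the per-iterate bound is vacuous and must be treated as an $\mathcal{O}(1)$ constant) and to make the asymptotics $\bar{S}_k\sim k^3/3$ and $\sum(t+1)^2/t\sim k^2/2$ precise enough that the constants are absorbed cleanly into the $\mathcal{O}(\cdot)$ notation.
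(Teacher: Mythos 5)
Your proposal is correct, and for the feasibility bound and the optimality splitting $\|\hat{x}_k - x^*\|^2 \le 2\|\hat{x}_k-\hat{w}_k\|^2 + 2\|\hat{w}_k-x^*\|^2$ it coincides with the paper's argument. Where you genuinely diverge is in how you resolve the weight mismatch for the term $\mathbb{E}[\|\hat{w}_k - x^*\|^2]$: the paper stays inside the telescoped inequality and uses the elementary bound $\frac{(t+1)^2}{k+1} \le (t+1)$ to upgrade the $(t+1)$-weighted sum $\sum_t (t+1)\mathbb{E}[\|\Pi_{\mathcal{X}}(x_t)-x^*\|^2]$ to a $(t+1)^2$-weighted one at the price of a factor $\frac{1}{k+1}$, after which a single application of Jensen gives $\frac{2\bar{S}_k}{k+1}\mathbb{E}[\|\hat{w}_k-x^*\|^2] \le \frac{4k}{\mu^2}\hat{C}_{\beta,B_f,\mathcal{B}_h}$ and hence the $\mathcal{O}(1/k)$ rate directly. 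You instead run a two-stage argument: first extract the per-iterate decay $\mathbb{E}[\|x_t-x^*\|^2] \le 4\hat{C}_{\beta,B_f,\mathcal{B}_h}/(\mu^2 t)$ by discarding the nonnegative sums, then re-insert it into the $(t+1)^2$-weighted average via nonexpansiveness of $\Pi_{\mathcal{X}}$ and the series estimate $\sum_{t=1}^{k-1}(t+1)^2/t = \mathcal{O}(k^2)$. Both routes are valid and give the same orders. The paper's version is shorter and annihilates the initial condition exactly (the $t=0$ term enters the telescoped left-hand side with weight $0^2=0$), so no $\|x_0-x^*\|^2$-dependent constant appears; your version carries a harmless lower-order term $\mathcal{O}(\|x_0-x^*\|^2/\bar{S}_k) = \mathcal{O}(1/k^3)$ from the $t=0$ summand, but in exchange it yields the last-iterate bound $\mathbb{E}[\|x_k-x^*\|^2] = \mathcal{O}(1/k)$ as a by-product, which is a strictly stronger piece of information than the averaged rates alone and is not stated in the paper.
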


\begin{proof}
For any {$k \ge 0$}, we have the relation \eqref{eq:main_recc_strconvSMBA} from Lemma \ref{lemma_main_recc_strconvSMBA}. Summing it from $0$ to {$k-1$}, we obtain:
\begin{align*} 
    k^2 \mathbb{E}[\|x_{k} - x^*\|^2] & \le - 2 \sum_{t= {0}}^{k-1} (t+1) \mathbb{E}[\|\Pi_{\mathcal{X}} (x_t) - x^*\|^2] \\
    & \quad  -  \frac{\beta  (1 - \beta)}{2c \mathcal{B}_h^2} \sum_{t={0}}^{k-1} (t+1)^2 \mathbb{E}[\|x_t - \Pi_\mathcal{X}(x_t)\|^2] \!+\! \frac{4}{\mu^2} \hat{C}_{\beta,B_f,\mathcal{B}_h} {k}.
\end{align*}
\red{Now, using the fact that $\frac{(t+1)^2}{(k+1)}  \le  (t+1)$ for all {$t = 0 : k$}, we have:}
\vspace{-0.3cm}
\begin{align*} 
    \red{k^2 \mathbb{E}[\|x_{k} - x^*\|^2]} & \red{\le  - 2 \frac{1}{(k+1)} \sum_{t= {0}}^{k-1} (t+1)^2 \mathbb{E}[\|\Pi_{\mathcal{X}} (x_t) - x^*\|^2]} \\
    & \red{\quad  -  \frac{\beta  (1 - \beta)}{2c \mathcal{B}_h^2} \sum_{t= {0}}^{k-1} (t+1)^2 \mathbb{E}[\|x_t - \Pi_\mathcal{X}(x_t)\|^2] + \frac{4}{\mu^2} \hat{C}_{\beta,B_f,\mathcal{B}_h} {k}}\\
    & \red{= - 2 \frac{\bar{S}_k}{\bar{S}_k (k+1)} \sum_{t= {0}}^{k-1} (t+1)^2 \mathbb{E}[\|\Pi_{\mathcal{X}} (x_t) - x^*\|^2] }\\
    & \red{\quad  -  \frac{\bar{S}_k \beta  (1 - \beta)}{2 \bar{S}_k c \mathcal{B}_h^2} \sum_{t= {0}}^{k-1} (t+1)^2 \mathbb{E}[\|x_t - \Pi_\mathcal{X}(x_t)\|^2] + \frac{4}{\mu^2} \hat{C}_{\beta,B_f,\mathcal{B}_h} {k}.}
\end{align*}
\red{Now, using non-negativity of $(k+1)^2 \mathbb{E}[\|x_{k+1} - x^*\|^2$, the linearity of the expectation operator, convexity of $\|\cdot\|^2$ and the expressions of $\hat{w}_{k}$ and $ \hat{x}_{k}$,} we further get: 
\begin{align*} 
	& \frac{2\bar{S}_k}{ (k+1)} \mathbb{E}[\|\hat{w}_{k} - x^* \|^2] + \frac{\beta  (1 - \beta) \bar{S}_k}{2 c \mathcal{B}_h^2}\mathbb{E}[\|\hat{x}_{k} - \hat{w}_{k}\|^2] \le \frac{4}{\mu^2} \hat{C}_{\beta,B_f,\mathcal{B}_h} {k}.
\end{align*}
After some simple calculations and keeping only the dominant terms, we have:
\begin{align*}
    & \mathbb{E}[\|\hat{w}_{k} - x^* \|^2] \le \mathcal{O} \left( \frac{\hat{C}_{\beta,B_f,\mathcal{B}_h}}{\mu^2 (k + 1)} \right),\\
    & \mathbb{E}[\|\hat{x}_{k} - \hat{w}_{k}\|^2 ]\le \mathcal{O} \left( \frac{c \mathcal{B}_h^2 \hat{C}_{\beta,B_f,\mathcal{B}_h}}{\beta  (1 - \beta) \mu^2 (k+1)^2 } \right).
\end{align*}
Since $\hat{w}_k \in \mathcal{X}$, we get the following convergence rate for the average sequence $\hat{x}_{k}$ in terms of feasibility violation:
\begin{align*}
    \mathbb{E}[\dist^2 (\hat{x}_{k}, \mathcal{X})] \le \mathbb{E}[\|\hat{x}_{k} - \hat{w}_{k}\|^2 ]\le \mathcal{O} \left( \frac{c \mathcal{B}_h^2 \hat{C}_{\beta,B_f, \mathcal{B}_h}}{\beta  (1 - \beta) \mu^2 (k+1)^2 } \right).
\end{align*}
Moreover, we get the convergence rate for average sequence $\hat{x}_k$ in terms of optimality:
\begin{align*}
    \mathbb{E}[\|\hat{x}_{k} - x^*\|^2 ] & \le 2 \mathbb{E}[\|\hat{x}_{k} - \hat{w}_{k}\|^2 ] + 2 \mathbb{E}[\|\hat{w}_{k} - x^*\|^2 ]\\
    & \le \mathcal{O}\left( \frac{\hat{C}_{\beta,B_f,\mathcal{B}_h}}{\mu^2 (k + 1)} + \frac{c \mathcal{B}_h^2 \hat{C}_{\beta,B_f,\mathcal{B}_h}}{\beta  (1 - \beta) \mu^2 (k+1)^2} \right).
\end{align*}
Hence, we obtain the claimed results.
\end{proof}

\noindent  The convergence rates from Theorem \ref{Th:convex_case2SMBA}  (Theorem \ref{Th:strconvex_caseSMBA})  are new
and valid for the convex settings,   i.e., when the objective function and the functional constraints are both convex in problem \eqref{eq:probsmooth}. In these settings,  we obtain convergence rates $\mathcal{O} (k^{-1/2})$ or $\mathcal{O} (k^{-1})$  in terms of optimality and feasibility for an average sequence, for convex or strongly convex objective, respectively. Also, it is important to note that,  according to our convergence analysis,  we need to choose the stepsize $\beta \in (0,1)$, although in numerical simulations (see Section 5 below) we observed that algorithm SMBA works also for $\beta \in [1,2)$. It is an open question how to modify our current convergence analysis to cover a larger interval choice for $\beta$. \textit{To the best of our knowledge, this paper is the first providing a simple stochastic  primal gradient algorithm for solving convex smooth problems of the form \eqref{eq:probsmooth} that has mathematical guarantees of convergence of order $\mathcal{O} (1/\sqrt{k})$/$\mathcal{O} (1/k)$, \red{thus matching the convergence rates obtained in some other papers from the literature on stochastic (primal-dual) (sub)gradient methods, see e.g.,  \cite{NecSin:22, NecNed:21, Ned:11, WanBer:16}.}}



\section{Numerical results}
In this section, we test the performance of our algorithm on convex quadratically constrained quadratic programs (QCQPs) given in  \eqref{qcqp} using synthetic and real data. 
We compare our Stochastic Moving Ball Approximation (SMBA) algorithm to its deterministic counterparts, MBA \red{and MBA-AS} algorithms \cite{AusTeb:10}, and two dedicated commercial optimization software packages, Gurobi \cite{Gurobi} and FICO \cite{FICO} (both have specialized solvers for QCQPs). The implementation details are conducted using MATLAB R2023b on a laptop equipped with an i5 CPU operating at 2.1 GHz and 16 GB of RAM.

\vspace{-0.3cm}
\subsection{Solving standard convex QCQPs} 
\noindent To solve QCQPs \eqref{qcqp}, we consider two settings for the objective function, i.e., convex and strongly convex, and convex functional constraints. Also, we consider $\mathcal{Y} = \mathbb{R}^n_+$, and to generate the synthetic data we follow the same technique as in \cite{AusTeb:10}. This means to generate $m$ convex constraints in \eqref{qcqp}, choosing $Q_i = Y_i^T D_i Y_i$, where $Y_i$'s are randomly generated orthogonal $n\times n$ matrices using \texttt{orth} in Matlab and each $D_i$ is a diagonal matrix with $n/10$ zero entries on the diagonal and the rest of the entries are randomly generated in $(0, 1)$. Moreover, for the convex objective, the positive semidefinite matrix $Q_f = Y_f^T D_f Y_f$ is generated in the same fashion as  $ Q_i$'s, and for the strongly convex case, we generate the positive definite matrix $Q_f = Y_f^T D_f Y_f$ with each entry in the diagonal of the matrix $D_f$ to be nonzero ranging in $(0,1)$. Furthermore, the vectors $q_f, q_i \in \mathbb{R}^n$, for all $i = 1:m$, are generated from uniform distributions. \red{For the scalars $b_i\in \mathbb{R}$ for all $i = 1:m$, we have considered two scenarios: for the first half of the table we generated uniformly at random an initial point $x_0$ and then chose $b_i$'s as $b_i = \frac{1}{2} x_0^T Q_i x_0 + q_i^T x + 0.1$; for the second half of the table we generated $b_i$'s uniformly at random. Note that in both scenarios the QCQPs are feasible, since in the first case $x_0$ is a feasible point, while in the second case $0$ is always  a feasible point. }

\medskip

\noindent In order to solve QCQPs \eqref{qcqp} using SMBA, we set the stepsizes {$\alpha_k = \frac{2}{\mu (k+1)}$} and  $\beta = 0.96, \text{ or }1.96$ for the strongly convex objective;  $\alpha_k = \frac{1}{L_f\sqrt{k+2}\ln{(k+2)}}$ and  $\beta = 0.96, \text{ or }1.96$ for the convex objective. Moreover, we choose $p_{v_k, \xi_k}$ as defined in \eqref{eq:alg2step2SMBA}. We stop SMBA when  $\|\max(0, h(x))\|^2 \le 10^{-2}$ and $|f(x) - f^*| \leq 10^{-2}$ (with $f^*$ computed via FICO solver) or when $\max (\|x_{k+1} - x_k\|^2, \ldots, \|x_{k-M+1} - x_{k-M}\|^2) \le 10^{-3}$, with  $M=10$ (when FICO does not solve the QCQP due to license limitations). Furthermore, MBA \red{and MBA-AS} being feasible at each iteration, we stop them when $|f(x) - f^*| \leq 10^{-2}$ (with $f^*$ computed via FICO) or $\max (\|x_{k+1} - x_k\|^2, \ldots, \|x_{k-M+1} - x_{k-M}\|^2) \le 10^{-3}$, with  $M=10$ (when FICO does not solve the problem). 



\begin{landscape}
    \begin{table}[ht]
\centering
\caption{\noindent Comparison of SMBA (two different choices of $\beta$), MBA \red{and MBA-AS} (when $x_0$ is feasible), Gurobi, and FICO solvers in terms of CPU time (in sec.) on QCQPs having various dimensions $n, m$ and with feasible initial point (strongly convex and convex objective) and with infeasible initial point (strongly convex and convex objective).}
\label{table}
\begin{tabular}{|ccccccccc|}
\hline
\multicolumn{1}{|c|}{\multirow{2}{*}{\textbf{\begin{tabular}[c]{@{}c@{}}Data\\ (n, m)\end{tabular}}}} & \multicolumn{4}{c|}{SMBA}                                                                                                                                                              & \multicolumn{1}{c|}{\multirow{2}{*}{MBA}} & \multicolumn{1}{c|}{\multirow{2}{*}{\red{MBA-AS}}} & \multicolumn{1}{c|}{\multirow{2}{*}{Gurobi}} & \multicolumn{1}{l|}{\multirow{2}{*}{FICO}} \\ 
\multicolumn{1}{|c|}{}                                                                                & \multicolumn{1}{c|}{$\beta = 0.96$}       & \multicolumn{1}{c|}{std}                  & \multicolumn{1}{c|}{$\beta = 1.96$}                & \multicolumn{1}{c|}{std}                  & \multicolumn{1}{c|}{}      & \multicolumn{1}{c|}{}               & \multicolumn{1}{c|}{}                        & \multicolumn{1}{l|}{}                      \\ \hline
\multicolumn{9}{|c|}{Initial point $x_0$ is feasible}                                                                                                                                                \\ \hline
\multicolumn{1}{|c|}{$10^2, 10^2$}                                                                    & \multicolumn{1}{c|}{$2.2 \times 10^{-3}$} & \multicolumn{1}{l|}{$1.8 \times 10^{-3}$} & \multicolumn{1}{l|}{$\mathbf{1.7 \times 10^{-3}}$} & \multicolumn{1}{l|}{$0.7 \times 10^{-3}$} & \multicolumn{1}{c|}{0.08}   & \multicolumn{1}{c|}{\red{0.01}}              & \multicolumn{1}{c|}{2.22}                    & 1.02                                       \\ \hline
\multicolumn{1}{|c|}{$10^2, 10^3$}                                                                    & \multicolumn{1}{c|}{0.02}                 & \multicolumn{1}{c|}{$1.6 \times 10^{-3}$} & \multicolumn{1}{c|}{\textbf{0.01}}                 & \multicolumn{1}{c|}{$0.6 \times 10^{-3}$} & \multicolumn{1}{c|}{2.20}     & \multicolumn{1}{c|}{\red{0.24}}           & \multicolumn{1}{c|}{27.42}                   & 2.61                                       \\ \hline
\multicolumn{1}{|l|}{$10^2, 5\times 10^3$}                                                            & \multicolumn{1}{c|}{\textbf{0.04}}        & \multicolumn{1}{c|}{$2.5 \times 10^{-3}$} & \multicolumn{1}{c|}{0.05}                          & \multicolumn{1}{c|}{$7.5 \times 10^{-3}$} & \multicolumn{1}{c|}{134.59}     & \multicolumn{1}{c|}{\red{2.24}}          & \multicolumn{1}{c|}{119.90}                  & 80.83                                      \\ \hline
\multicolumn{1}{|c|}{$10^3, 10^2$}                                                                    & \multicolumn{1}{c|}{\textbf{0.08}}        & \multicolumn{1}{c|}{$5.8 \times 10^{-3}$} & \multicolumn{1}{c|}{0.12}                          & \multicolumn{1}{c|}{0.1}                  & \multicolumn{1}{c|}{3.5}     & \multicolumn{1}{c|}{\red{1.71}}             & \multicolumn{1}{c|}{$1.7\times 10^3$}        & 11.47                                      \\ \hline
\multicolumn{1}{|c|}{$10^3, 10^3$}                                                                    & \multicolumn{1}{c|}{\textbf{0.56}}        & \multicolumn{1}{c|}{0.06}                 & \multicolumn{1}{c|}{0.67}                          & \multicolumn{1}{c|}{0.04}                 & \multicolumn{1}{c|}{73.21}       & \multicolumn{1}{c|}{\red{21.77}}        & \multicolumn{1}{c|}{$2.2\times 10^4$}        & 234.21                                     \\ \hline
\multicolumn{1}{|c|}{$10^3, 5\times10^3$}                                                             & \multicolumn{1}{c|}{2.96}                 & \multicolumn{1}{c|}{0.03}                 & \multicolumn{1}{c|}{\textbf{2.76}}                 & \multicolumn{1}{c|}{0.13}                 & \multicolumn{1}{c|}{830.65}    & \multicolumn{1}{c|}{\red{92.47}}           & \multicolumn{1}{c|}{*}                       & 606.96                                     \\ \hline
\multicolumn{1}{|c|}{$10^3, 10^4$}                                                                    & \multicolumn{1}{c|}{\textbf{9.41}}        & \multicolumn{1}{c|}{2.69}                 & \multicolumn{1}{c|}{9.79}                          & \multicolumn{1}{c|}{3.96}                 & \multicolumn{1}{c|}{$1.07\times 10^4$}  & \multicolumn{1}{c|}{\red{$1.76\times 10^3$}}  & \multicolumn{1}{c|}{*}                       & *                                          \\ \hline
\hline


\multicolumn{1}{|c|}{$10^2, 10^2$}                                                                    & \multicolumn{1}{c|}{\textbf{0.03}}        & \multicolumn{1}{c|}{$6.1 \times 10^{-3}$} & \multicolumn{1}{c|}{0.04}                          & \multicolumn{1}{c|}{$6.4 \times 10^{-3}$} & \multicolumn{1}{c|}{0.29}      & \multicolumn{1}{c|}{\red{0.04}}           & \multicolumn{1}{c|}{6.26}                    & 1.28                                       \\ \hline
\multicolumn{1}{|c|}{$10^2, 10^3$}                                                                    & \multicolumn{1}{c|}{\textbf{0.01}}        & \multicolumn{1}{c|}{$5 \times 10^{-3}$}   & \multicolumn{1}{c|}{0.02}                          & \multicolumn{1}{c|}{$3.3 \times 10^{-3}$} & \multicolumn{1}{c|}{3.97}     & \multicolumn{1}{c|}{\red{0.47}}            & \multicolumn{1}{c|}{10.86}                   & 3.91                                       \\ \hline
\multicolumn{1}{|c|}{$10^2, 5 \times 10^3$}                                                           & \multicolumn{1}{c|}{\textbf{0.06}}        & \multicolumn{1}{c|}{$9.4 \times 10^{-3}$} & \multicolumn{1}{c|}{0.18}                          & \multicolumn{1}{c|}{0.28}                 & \multicolumn{1}{c|}{281.17}     & \multicolumn{1}{c|}{\red{3.25}}          & \multicolumn{1}{c|}{175.21}                   & 9.03                                       \\ \hline
\multicolumn{1}{|c|}{$10^3, 10^2$}                                                                    & \multicolumn{1}{c|}{\textbf{2.28}}        & \multicolumn{1}{c|}{0.04}                 & \multicolumn{1}{c|}{2.41}                          & \multicolumn{1}{c|}{0.14}                 & \multicolumn{1}{c|}{7.93}         & \multicolumn{1}{c|}{\red{3.05}}        & \multicolumn{1}{c|}{$1.14\times 10^3$}       & 39.86                                      \\ \hline
\multicolumn{1}{|c|}{$10^3, 10^3$}                                                                    & \multicolumn{1}{c|}{3.18}                 & \multicolumn{1}{c|}{0.95}                 & \multicolumn{1}{c|}{\textbf{2.03}}                 & \multicolumn{1}{c|}{0.03}                 & \multicolumn{1}{c|}{263.62}      & \multicolumn{1}{c|}{\red{29.49}}         & \multicolumn{1}{c|}{*}                       & 401.75                                     \\ \hline
\multicolumn{1}{|c|}{$10^3, 5 \times 10^3$}                                                           & \multicolumn{1}{c|}{76.83}                & \multicolumn{1}{c|}{3.25}                 & \multicolumn{1}{c|}{\textbf{72.04}}                & \multicolumn{1}{c|}{1.01}                 & \multicolumn{1}{c|}{$1.9\times 10^3$}   & \multicolumn{1}{c|}{\red{142.58}}  & \multicolumn{1}{c|}{*}                       & $1.07\times 10^3$                          \\ \hline
\multicolumn{1}{|c|}{$10^3, 10^4$}                                                                    & \multicolumn{1}{c|}{332.04}               & \multicolumn{1}{c|}{7.94}                 & \multicolumn{1}{c|}{\textbf{289.58}}               & \multicolumn{1}{c|}{5.02}                 & \multicolumn{1}{c|}{$2.09\times 10^4$}  & \multicolumn{1}{c|}{\red{$4.12\times 10^3$}}   & \multicolumn{1}{c|}{*}                       & *                                          \\ \hline
\multicolumn{9}{|c|}{Initial point $x_0$ is infeasible}                                                                                                                                     
   \\ \hline
\multicolumn{1}{|c|}{$10^2, 10^2$}                                                                    & \multicolumn{1}{c|}{0.97}                 & \multicolumn{1}{c|}{0.50}                 & \multicolumn{1}{c|}{\textbf{0.88}}                 & \multicolumn{1}{c|}{0.24}                 & \multicolumn{1}{c|}{*}      & \multicolumn{1}{c|}{*}             & \multicolumn{1}{c|}{5.94}                    & 1.24                                       \\ \hline
\multicolumn{1}{|c|}{$10^2, 10^3$}                                                                    & \multicolumn{1}{c|}{21.76}                & \multicolumn{1}{c|}{3.20}                 & \multicolumn{1}{c|}{8.56}                          & \multicolumn{1}{c|}{4.02}                 & \multicolumn{1}{c|}{*}        & \multicolumn{1}{c|}{*}            & \multicolumn{1}{c|}{64.05}                   & \textbf{4.36}                              \\ \hline
\multicolumn{1}{|c|}{$10^2, 5 \times 10^3$}                                                           & \multicolumn{1}{c|}{18.50}                & \multicolumn{1}{c|}{3.01}                 & \multicolumn{1}{c|}{\textbf{3.35}}                 & \multicolumn{1}{c|}{1.64}                 & \multicolumn{1}{c|}{*}      & \multicolumn{1}{c|}{*}              & \multicolumn{1}{c|}{326.44}                  & 12.66                                      \\ \hline
\multicolumn{1}{|c|}{$10^3, 10^2$}                                                                    & \multicolumn{1}{c|}{528.49}               & \multicolumn{1}{c|}{8.92}                 & \multicolumn{1}{c|}{243.01}                        & \multicolumn{1}{c|}{7.72}                 & \multicolumn{1}{c|}{*}     & \multicolumn{1}{c|}{*}               & \multicolumn{1}{c|}{$4.59\times 10^3$}       & \textbf{22.64}                             \\ \hline
\multicolumn{1}{|c|}{$10^3, 10^3$}                                                                    & \multicolumn{1}{c|}{138.14}               & \multicolumn{1}{c|}{2.65}                 & \multicolumn{1}{c|}{\textbf{107.01}}               & \multicolumn{1}{c|}{4.1}                  & \multicolumn{1}{c|}{*}       & \multicolumn{1}{c|}{*}             & \multicolumn{1}{c|}{*}                       & 255.08                                     \\ \hline
\multicolumn{1}{|c|}{$10^3, 5\times 10^3$}                                                            & \multicolumn{1}{c|}{913.16}                     & \multicolumn{1}{c|}{7.86}                     & \multicolumn{1}{c|}{\textbf{634.59}}                              & \multicolumn{1}{c|}{9.48}                     & \multicolumn{1}{c|}{*}       & \multicolumn{1}{c|}{*}             & \multicolumn{1}{c|}{*}                       & \multicolumn{1}{c|}{759.13}                      \\ \hline
\multicolumn{1}{|c|}{$10^3, 10^4$}                                                                    & \multicolumn{1}{c|}{$2.98\times 10^3$}                     & \multicolumn{1}{c|}{6.91}                     & \multicolumn{1}{c|}{$\mathbf{1.36\times 10^3}$}                              & \multicolumn{1}{c|}{8.57}                     & \multicolumn{1}{c|}{*}      & \multicolumn{1}{c|}{*}              & \multicolumn{1}{c|}{*}                       & *                                          \\ \hline
\hline 


\multicolumn{1}{|c|}{$10^2, 10^2$}                                                                    & \multicolumn{1}{c|}{0.29}                 & \multicolumn{1}{c|}{0.08}                 & \multicolumn{1}{c|}{\textbf{0.12}}                 & \multicolumn{1}{c|}{0.03}                 & \multicolumn{1}{c|}{*}         & \multicolumn{1}{c|}{*}           & \multicolumn{1}{c|}{4.52}                    & 0.91                                       \\ \hline
\multicolumn{1}{|c|}{$10^2, 10^3$}                                                                    & \multicolumn{1}{c|}{2.18}                 & \multicolumn{1}{c|}{1.2}                  & \multicolumn{1}{c|}{\textbf{0.74}}                 & \multicolumn{1}{c|}{0.83}                 & \multicolumn{1}{c|}{*}     & \multicolumn{1}{c|}{*}               & \multicolumn{1}{c|}{27.6}                    & 2.76                                       \\ \hline
\multicolumn{1}{|c|}{$10^2, 5 \times 10^3$}                                                           & \multicolumn{1}{c|}{30.49}                & \multicolumn{1}{c|}{6.68}                 & \multicolumn{1}{c|}{\textbf{1.86}}                 & \multicolumn{1}{c|}{1.66}                 & \multicolumn{1}{c|}{*}     & \multicolumn{1}{c|}{*}               & \multicolumn{1}{c|}{251.35}                   & 9.96                                       \\ \hline
\multicolumn{1}{|c|}{$10^3, 10^2$}                                                                    & \multicolumn{1}{c|}{15.95}                & \multicolumn{1}{c|}{5.36}                 & \multicolumn{1}{c|}{\textbf{15.25}}                & \multicolumn{1}{c|}{4.74}                 & \multicolumn{1}{c|}{*}       & \multicolumn{1}{c|}{*}             & \multicolumn{1}{c|}{$7.21\times 10^3$}                   & 23.55                                      \\ \hline
\multicolumn{1}{|c|}{$10^3, 10^3$}                                                                    & \multicolumn{1}{c|}{115.94}               & \multicolumn{1}{c|}{6.51}                 & \multicolumn{1}{c|}{\textbf{104.22}}               & \multicolumn{1}{c|}{5.83}                 & \multicolumn{1}{c|}{*}     & \multicolumn{1}{c|}{*}               & \multicolumn{1}{c|}{*}                       & 230.76                                     \\ \hline
\multicolumn{1}{|c|}{$10^3, 5\times 10^3$}                                                            & \multicolumn{1}{c|}{$1.03\times 10^3$}                     & \multicolumn{1}{c|}{10.62}                     & \multicolumn{1}{c|}{698.14}                              & \multicolumn{1}{c|}{9.64}                     & \multicolumn{1}{c|}{*}         & \multicolumn{1}{c|}{*}           & \multicolumn{1}{c|}{*}                       & \multicolumn{1}{c|}{\textbf{615.76}}                      \\ \hline
\multicolumn{1}{|c|}{$10^3, 10^4$}                                                                    & \multicolumn{1}{c|}{$7.39\times 10^3$}                     & \multicolumn{1}{c|}{11.07}                     & \multicolumn{1}{c|}{$\mathbf{4.96\times 10^3}$}                              & \multicolumn{1}{c|}{9.46}                     & \multicolumn{1}{c|}{*}     & \multicolumn{1}{c|}{*}               & \multicolumn{1}{c|}{*}                       & *                                          \\ \hline
\end{tabular}
\tiny{\textbf{Note:} The $*$ in the MBA \red{and MBA-AS} column means that  MBA \red{and MBA-AS} is not applicable (the initial point is not feasible), in the Gurobi column means that Gurobi did not finish in 9 hours, and in the FICO column means that FICO was not solving the problem due to academic license limitation.}
\end{table}
\end{landscape}


\noindent In Table \ref{table} we provide the average CPU times (out of $10$ runs) in seconds together with the standard deviation (std) for the proposed algorithm SMBA, and CPU times in seconds for the state-of-the-art methods MBA \red{ and MBA-AS}, and the commercial solvers Gurobi and FICO for various dimensions $n, m$ in the QCQPs \eqref{qcqp}. The table is divided into two parts, the first part of the table presents the results when the initial point $x_0$ is chosen as feasible, while the second part shows the results when the initial point is infeasible. In both parts, the first half is for strongly convex objective and the other half is for convex objective function. One needs to observe here that SMBA outperforms drastically MBA (on large problems is even 50x faster), \red{MBA-AS (on large problems is about 10x faster),} Gurobi and FICO  when the initial point is feasible. Moreover,  for SMBA, in this case, it is tough to decide which choice of  $\beta$ works better, i.e., $\beta = 0.96 \text{ or } \beta= 1.96$.  On the other hand, if we start with an infeasible initial point, MBA \red{and MBA-AS} cannot be considered anymore for  solving the QCQPs, as they require the initial point to be feasible. Also in this scenario, SMBA always outperforms Gurobi, however FICO being not dependent on the initial point has sometimes a better behaviour than SMBA. Moreover, one can see that in this case, the better choice of $\beta$ is  usually $1.96$.

\medskip 

\begin{figure}[H]
    \centering
    \includegraphics[height=5.5cm, width=6cm]{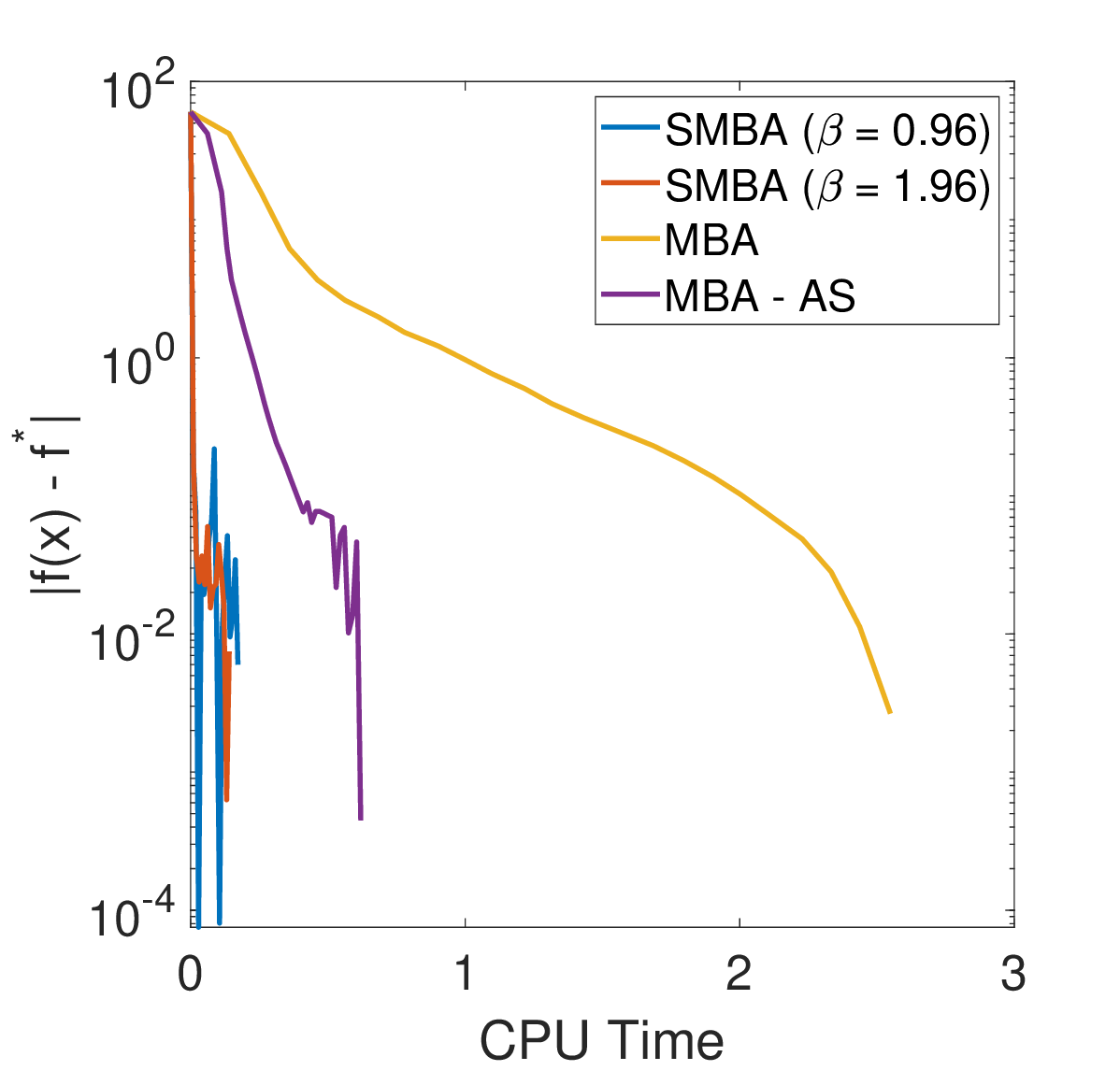}
    \includegraphics[height=5.5cm, width=6cm]{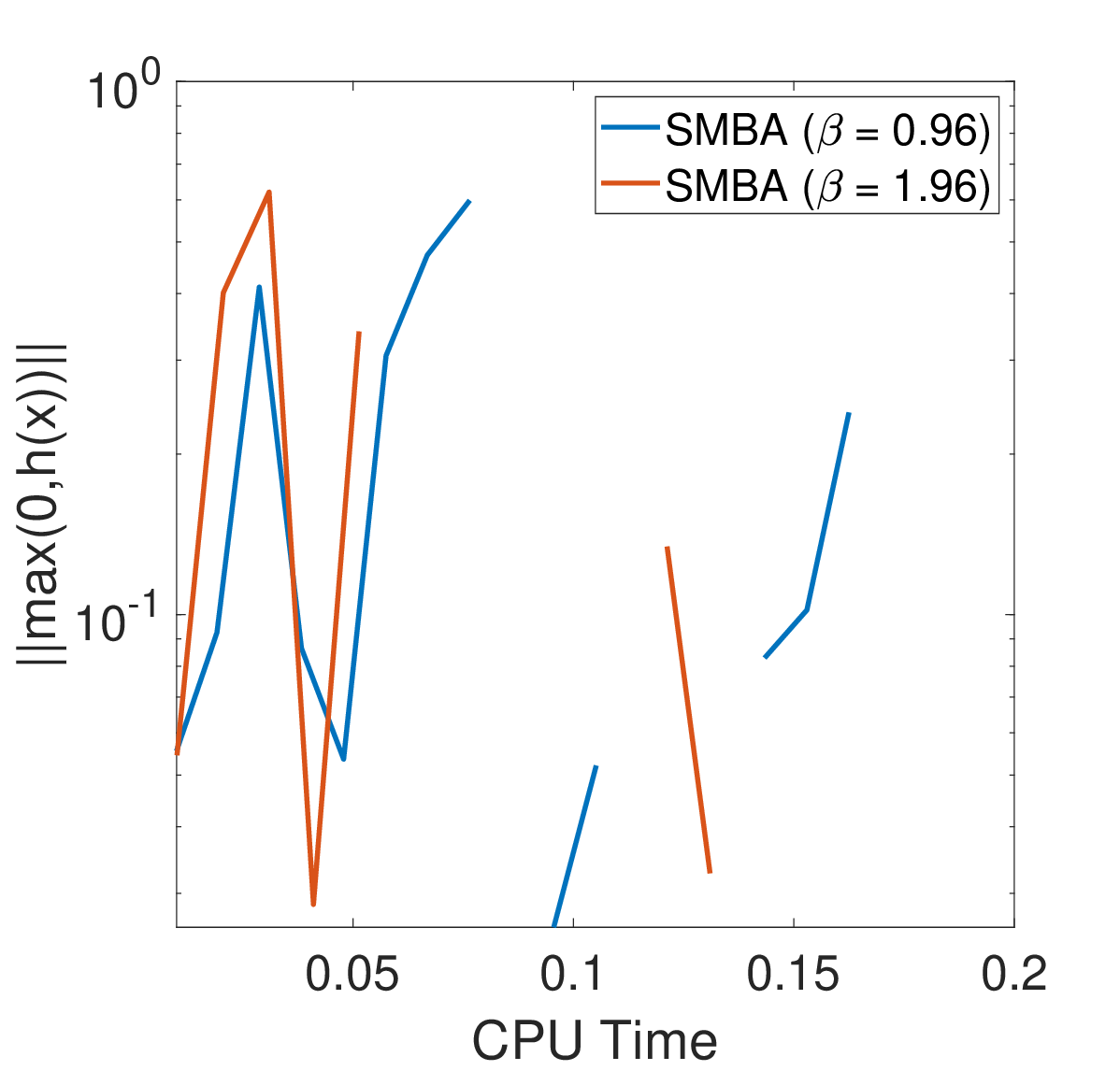}
    \caption{Behaviour of SMBA  for two choices of $\beta$ ($0.96$ and $1.96$) and comparison with MBA and MBA-AS in terms of optimality (left) and feasibility (right) for $n=100,  m = 2000$ along time (sec).}
    \label{fig:1}
\end{figure}
\noindent \red{In Figure \ref{fig:1} we plot the behaviour of SMBA algorithm (with $\beta = 0.96$ and $\beta = 1.96$) and  MBA and MBA-AS methods along time (in seconds), focusing on optimality (left) and feasibility trends (right) for dimension $n=100$ and $m=2000$ of the QCQP having  a convex objective. Notably, both MBA and MBA-AS are feasible at each iteration, so we do not plot them in the right figure. It is evident that  SMBA algorithm significantly outperforms MBA and MBA-AS, demonstrating a clear advantage in terms of optimality along time.} { Furthermore, the breaks observed in the feasibility plots occur because the SMBA algorithm is feasible at certain iterations, meaning $\|\max(0, h(x))\| = 0$. However, since the algorithm has not yet satisfied the optimality criterion at those points, it continues. During this process, it is possible for SMBA to become infeasible again, as the algorithm does not guarantee feasibility at each step.}


\subsection{Solving multiple kernel learning in support vector machine}
\noindent In this section, we test SMBA  on Support Vector Machine (SVM) with multiple kernel learning using real data, which can be formulated as a convex QCQP. Let us briefly describe the problem (our exposition follows \cite{CheLi:21}). Given a set of $N$ data points $\mathcal{S} = \{(a_{j},y_{j})\}_{j=1}^{N}$, where $a_j \in \mathbb{R}^{n_d}$ is the $n_d$-dimension input vector and $y_j \in \{-1, 1\}$ is its class label, SVM searches for a hyperplane that can best separate the points from the two classes. When the data points cannot be separated in the original space $\mathbb{R}^{n_d}$, we can search in a feature space $\mathbb{R}^{n_f}$, by mapping the input data space $\mathbb{R}^{n_d}$ to the feature space through a function $\varphi: \mathbb{R}^{n_d} \to \mathbb{R}^{n_f}$. Using the function $\varphi$, we can define a kernel function $\kappa: \mathbb{R}^{n_d} \times \mathbb{R}^{n_d}\to \mathbb{R}$ as $\kappa(a_j, a_{j'}):= \langle \varphi (a_j), \varphi(a_{j'}) \rangle$ for any $a_j, a_{j'} \in \mathbb{R}^{n_d}$, where $\langle \cdot, \cdot \rangle$ denotes the inner product. Popular choices of kernel functions in the SVM literature include the linear kernel function $\kappa_\text{LIN}$, the polynomial kernel function $\kappa_\text{POL}$, and the Gaussian kernel function $\kappa_\text{GAU}$: 
\begin{align*}
    & \kappa_\text{LIN} (a_j, a_{j'}) = a_j^T a_{j'}, \quad \forall a_j, a_{j'} \in \mathbb{R}^{n_d} \\
    & \kappa_\text{POL} (a_j, a_{j'}) = (1 + a_j^T a_{j'})^r, \quad r>0, \; \forall a_j, a_{j'} \in \mathbb{R}^{n_d}\\
    & \kappa_\text{GAU} (a_j, a_{j'}) = \exp^{-\frac{\|a_j -  a_{j'}\|^2}{2\sigma^2}}, \quad \sigma > 0,  \; \forall a_j, a_{j'} \in \mathbb{R}^{n_d}.
\end{align*}
We separate the given set $\mathcal{S}$ into a training set, $\mathcal{S}_\text{tr} = \{(a_{j},y_{j})\}_{j=1}^{N_\text{tr}}$ and a testing set,  $\mathcal{S}_\text{te} = \{(a_{j},y_{j})\}_{j=1}^{N_\text{te}}$, such that $N_\text{tr} + N_\text{te} = N$. Choosing a set of kernel functions $(\kappa_i)_{i=1}^m$, the SVM classifier is learned by solving the following convex QCQP problem on the training set  $\mathcal{S}_\text{tr}$:
\begin{align}\label{MuliSVM}
    \min_{ \alpha \in \mathbb{R}_{+}^{N_\text{tr}}, \;  \sum_{j=1}^{N_\text{tr}} y_{j} \alpha_{j} =0, \;  d \in \mathbb{R}} & \dfrac{1}{2C}  \|\alpha\|^2 - e^{T}\alpha + R d \\
   \text{s.t.} \quad & \dfrac{1}{2R_i} \alpha^{T}  (G_{i}(K_{i,\text{tr}})) \alpha - d \leq 0 \quad \forall i=1:m, \nonumber
\end{align}
where the parameter $C > 0$ is taken from the soft margin criteria, the vector $e$ denotes the $N_\text{tr}$-dimensional vector of all ones. {Given a labeled training data set \( \mathcal{S}_{\text{tr}} = \{(\mathbf{a}_j, l_j)\}_{j=1}^{N_{\text{tr}}} \) and an unlabeled test data set \( \mathcal{S}_{\text{te}} = \{\mathbf{a}_j\}_{j=1}^{N_{\text{te}}} \), a matrix \( K_i \in \mathbb{R}^{(N_{\text{tr}}+N_{\text{te}}) \times (N_{\text{tr}}+N_{\text{te}})} \) can be defined on the entire data set \( \mathcal{S}_{\text{tr}} \cup \mathcal{S}_{\text{te}} \) as follows:
$$K_i = \begin{pmatrix} K_{i,\text{tr}} & K_{i,(\text{tr,te})} \\ K_{i,(\text{tr,te})}^\top & K_{i,\text{te}} \end{pmatrix}.$$
The submatrix $K_{i, \text{tr}} \in \mathbb{R}^{N_\text{tr} \times N_\text{tr}}$ is a positive semidefinite matrix, whose $jj'$th element is defined by the kernel function: $[K_{i,\text{tr}}]_{jj'}:= \kappa_i(a_j, a_{j'}) $ for any $a_j, a_{j'} \in \mathcal{S}_\text{tr}$. The submatrices $K_{i, (\text{tr,te})} \in \mathbb{R}^{N_\text{tr} \times N_\text{te}}$ and $K_{i, \text{te}} \in \mathbb{R}^{N_\text{te} \times N_\text{te}}$ are defined in the same way but with different input vectors.}
The matrix $G_{i}(K_{i, \text{tr}}) \in \mathbb{R}^{N_\text{tr} \times N_\text{tr}}$ in the quadratic constraints of \eqref{MuliSVM} is a positive semidefinite  matrix with its $jj'$th element being $[G_i(K_{i,\text{tr}})]_{jj'} = y_j y_{j'} [K_{i,\text{tr}}]_{jj'}$. Moreover, $R$ and $R_i$'s are fixed positive constants. Clearly,  \eqref{MuliSVM} fits into the problem  \eqref{qcqp}, where: 
\begin{align*}
   &  x = 
\begin{bmatrix}
    \alpha \\ d
\end{bmatrix} \in \mathbb{R}^{N_\text{tr}+1}, 
\mathcal{Y} = 
\begin{bmatrix}
    \{\alpha \in \mathbb{R}_{+}^{N_\text{tr}}: \sum_{j=1}^{N_\text{tr}} y_{j} \alpha_{j} =0\}\\
    d \in \mathbb{R}
\end{bmatrix}, 
q_f = 
\begin{bmatrix}
    - e \\ R
\end{bmatrix} \in \mathbb{R}^{N_\text{tr}+1} \\
& Q_f = \frac{1}{C}
\begin{bmatrix}
    I_{N_\text{tr}}& 0\\
    0 & 0
\end{bmatrix}\in \mathbb{R}^{N_\text{tr}+1 \times N_\text{tr}+1},
Q_i = 
\begin{bmatrix}
    G_{i}(K_{i, \text{tr}})& 0\\
    0 & 0
\end{bmatrix}\in \mathbb{R}^{N_\text{tr}+1 \times N_\text{tr}+1}\\
& q_i = 
\begin{bmatrix}
    \mathbf{0} \\ \red{- R_i}
\end{bmatrix},
b_i = \mathbf{0}\in \mathbb{R}^{N_\text{tr}+1},
\end{align*}
with $I_{N_\text{tr}}$ the identity matrix and $\mathbf{0}$  the vector of zeros of appropriate dimensions. Since $Q_f$ and $Q_i$'s are positive semidefinite,  we can use the SMBA algorithm to solve the convex QCQP problem \eqref{MuliSVM} similar as we have done in Section $5.1$. Note that here the projection step \eqref{eq:alg2stepSMBA} in SMBA can be computed very efficiently using e.g., the algorithm in \cite{Kiw:08}. Once an optimal solution $(\alpha^*,d^*)$ is found, combining it with the associated multipliers corresponding to the quadratic constraints,  $(\lambda_i^*)_{i=1}^m$, and the pre-specified kernel functions $(\kappa_i)_{i=1}^m$, they can be used to label the test data set according to the following discriminant function:
\begin{align}\label{descri_func}
    \text{sign} \left(  \sum_{j=1}^{N_\text{tr}} y_j \alpha_j^* \left( \sum_{i=1}^m \lambda_i^* \kappa_i(a_j,a)  \right) + d^* \right) \quad \forall a \in \mathcal{S}_\text{te}.
\end{align} 


\noindent {The Test Set Accuracy (TSA) can then be obtained by measuring the percentage of the test data points accurately labeled according to the discriminant function \eqref{descri_func}}. In our experiments, we utilized real datasets sourced from the UCI Machine Learning Repository (\url{https://archive.ics.uci.edu/datasets}). Each dataset was divided into a training set comprising $80\%$ of the data and a testing set comprising the remaining $20\%$. We configured all parameters for the Stochastic Moving Ball Approximation (SMBA) algorithm according to the specifications outlined in Section $5.1$ for the convex case, including the defined stopping criteria. Furthermore, we employed a predefined set of Gaussian kernel functions \red{$(\kappa_i)_{i=1}^m$}, with the corresponding $\sigma^2_i$ values set to $m$ different grid points within the interval $[10^{-4}, 10^4]$. Following the pre-processing strategy outlined in \cite{CheLi:21}, we normalized each matrix {$K_{i}$} such that $R_i= \text{trace} ({K_{i}})$ was set to $1$, thus restricting $R = m$. For each dataset, we considered two different values for the number $m$ of grid points, namely $m = 10$ and $m = 50$.  Additionally, we set $C = 0.1$. Furthermore, in order to give a better overview of the advantages offered by the multiple kernel SVM approach,  we also learn a  single Gaussian kernel SVM classifier with $\sigma^2$  fixed a priori to $1$, by solving the following QP problem: 
\begin{align}\label{s-SVM}
    \min_{ 0 \leq \alpha \leq C, \;\; \sum_{j=1}^{N_\text{tr}} y_{j} \alpha_{j} =0 } & \frac{1}{2} \alpha^T G(K_{\text{tr}}) \alpha  - e^{T}\alpha. 
\end{align}

\medskip

\noindent \red{Since problem \eqref{s-SVM} does not have functional inequality constraints, we solve it with FICO.} Table \ref{table2} presents a comparison between SMBA and FICO in terms of CPU time for solving QCQP \eqref{MuliSVM} on real training sets: \texttt{Raisin}, \texttt{AIDS}, \texttt{Tuandromd}, \texttt{Predict students} and \texttt{Support2}. Notably, we did not compare with MBA \red{and MBA-AS} due to the consistent utilization of an infeasible initial point throughout the experiments. For each dataset, we also provided the nonzero optimal dual multiplier value corresponding to the active quadratic inequality constraint and the corresponding value of $\sigma^2$. Additionally, the last two columns of the table present a comparison between the Testing Set Accuracies (TSA) on the remaining testing datasets obtained by the multiple Gaussian kernel SVM classifier with $\sigma^2$ derived from  \eqref{MuliSVM}   and the single Gaussian kernel SVM classifier with $\sigma^2$  fixed a priori to $1$ in \eqref{s-SVM}, \red{denoted as TSA2}. \red{Note that while solving \eqref{MuliSVM} usually only one quadratic constraint is active at optimality (see  \cite{CheLi:21}) and thus the corresponding  nonzero dual multiplier for  SMBA is computed as follows: for the given primal solution, we select the most violated quadratic constraint and this is our choice for the active constraint; then, we compute the corresponding nonzero dual multiplier by solving a linear least-square problem arising from the optimality KKT condition. In the numerical experiments, we noticed that {for both algorithms, SMBA and FICO, we obtain the same index $i \in [1:m]$ corresponding to the nonzero dual multiplier and, moreover,} the values of the nonzero dual multiplier corresponding to the active inequality constraint are very close (see Table 2).  Hence, the values of $\sigma^2$ produced by FICO and SMBA coincide. {Furthermore, the stopping criteria used in SMBA guarantee that the optimal points $(\alpha^*, d^*)$ obtained by SMBA are very close to the optimal points $(\alpha^*, d^*)$ obtained by FICO.} Therefore, when computing the TSA based on the solutions given by SMBA and FICO, respectively, we get the same values. Hence, in Table 2 we report only once the values for  TSA.} The results indicate that SMBA generally outperforms FICO in terms of training CPU time, and the TSA obtained by solving \eqref{MuliSVM} is consistently superior to TSA2 obtained by solving \eqref{s-SVM}.

\begin{table}[ht]
\centering
\caption{\noindent Comparison of SMBA ($\beta = 0.96$), and FICO solver in terms of CPU time (in sec.) to solve \eqref{MuliSVM} for various real data and two different $m = 10, 50$. Additionally, the TSA for \eqref{MuliSVM} \red{based on the solution provided by SMBA} and \red{TSA2 for \eqref{s-SVM} based on the solution provided by FICO}.}
\label{table2}
\begin{tabular}{|l|l|ll|ll|l|l|}
\hline
\multicolumn{1}{|c|}{\multirow{2}{*}{\begin{tabular}[c]{@{}c@{}}Dataset\\ $(N, n_d)$\end{tabular}}} & \multicolumn{1}{c|}{\multirow{2}{*}{m}} & \multicolumn{2}{c|}{SMBA  $(\beta = 0.96)$}                                     & \multicolumn{2}{c|}{FICO} & \multirow{2}{*}{TSA} & \multicolumn{1}{c|}{\multirow{2}{*}{TSA2}} \\ 
\multicolumn{1}{|c|}{}                                                                              & \multicolumn{1}{c|}{}                   & \multicolumn{1}{c|}{Time} & \multicolumn{1}{c|}{$\sigma^2$, Nonzero $\lambda_i^*$} & \multicolumn{1}{c|}{Time}  & \multicolumn{1}{c|}{Nonzero $\lambda_i^*$}  &                      & \multicolumn{1}{c|}{}                      \\ \hline

\multirow{2}{*}{\begin{tabular}[c]{@{}c@{}}\texttt{Raisin}\\ $(900, 7)$\end{tabular}}        &   10     & \multicolumn{1}{c|}{\textbf{0.13}}  &   \multicolumn{1}{c|}{$1.11 \times 10^3$, $\lambda_2^* = 0.22$}         &   \multicolumn{1}{c|}{0.39}    &   \multicolumn{1}{c|}{\red{$\lambda_2^* = 0.219$}}      &    \multicolumn{1}{c|}{71.66}   &        \multirow{2}{*}{69.11}       \\ 
&      50           & \multicolumn{1}{c|}{\textbf{0.35}}  &    \multicolumn{1}{c|}{$204.08$, $\quad \;\;\lambda_2^* = 0.22$}        &    \multicolumn{1}{c|}{1.72}   &   \multicolumn{1}{c|}{\red{$\lambda_2^* = 0.218$}}     &    \multicolumn{1}{c|}{82.22}   &     \\ \hline

\multirow{2}{*}{\begin{tabular}[c]{@{}c@{}}\texttt{AIDS}\\ $(2139, 23)$\end{tabular}}       &   10     & \multicolumn{1}{c|}{\textbf{0.87}}  &   \multicolumn{1}{c|}{$5.55 \times 10^3$, $\lambda_6^* = 1.62$}         &   \multicolumn{1}{c|}{0.98}   &   \multicolumn{1}{c|}{\red{$\lambda_6^* = 1.61$}}       &    \multicolumn{1}{c|}{77.28}   &        \multirow{2}{*}{52.71}       \\ 
&      50           & \multicolumn{1}{c|}{\textbf{2.53}}  &    \multicolumn{1}{c|}{$2.44\times 10^3$, $\;\lambda_{13}^* = 1.61$}        &    \multicolumn{1}{c|}{5.41}   &   \multicolumn{1}{c|}{\red{$\lambda_{13}^* = 1.60$}}     &    \multicolumn{1}{c|}{80.17}   &     \\ \hline

\multirow{2}{*}{\begin{tabular}[c]{@{}c@{}}\texttt{Tuandromd}\\ $(4465, 242)$\end{tabular}}       &   10     & \multicolumn{1}{c|}{18.13}  &   \multicolumn{1}{c|}{$1\times 10^{-4}$, $\lambda_1^* = 0.5$}         &   \multicolumn{1}{c|}{\textbf{16.34}}     &   \multicolumn{1}{c|}{\red{$\lambda_1^* = 0.49$}}     &    \multicolumn{1}{c|}{53.47}   &        \multirow{2}{*}{47.19}       \\ 
&      50           & \multicolumn{1}{c|}{\textbf{17.12}}  &    \multicolumn{1}{c|}{$1\times 10^{-4}$, $\;\lambda_1^* = 0.02$}        &    \multicolumn{1}{c|}{77.25}    &   \multicolumn{1}{c|}{\red{$\lambda_1^* = 0.018$}}    &    \multicolumn{1}{c|}{55.49}   &     \\ \hline

\multirow{2}{*}{\begin{tabular}[c]{@{}c@{}}\texttt{Predict stu.}\\ $(4424, 36)$\end{tabular}}    &   10     & \multicolumn{1}{c|}{\textbf{15.63}}  &   \multicolumn{1}{c|}{$1\times 10^{-4}$, $\;\lambda_1^* = 0.05$}         &   \multicolumn{1}{c|}{16.65}    &   \multicolumn{1}{c|}{\red{$\lambda_1^* = 0.051$}}     &    \multicolumn{1}{c|}{67.55}   &        \multirow{2}{*}{59.11}       \\ 
&      50           & \multicolumn{1}{c|}{84.24}  &    \multicolumn{1}{c|}{$1\times 10^{-4}$, $\lambda_1^* = 0.51$}        &    \multicolumn{1}{c|}{\textbf{77.59}}    &   \multicolumn{1}{c|}{\red{$\lambda_1^* = 0.52$}}   &    \multicolumn{1}{c|}{72.67}   &     \\ \hline

\multirow{2}{*}{\begin{tabular}[c]{@{}c@{}}\texttt{Support2}\\ $(9105, 42)$\end{tabular}}    &   10     & \multicolumn{1}{c|}{\textbf{28.34}}  &   \multicolumn{1}{c|}{$4.44 \times 10^3$, $\lambda_5^* = 0.25$}         &   \multicolumn{1}{c|}{31.44}    &   \multicolumn{1}{c|}{\red{$\lambda_5^* = 0.22$}}      &    \multicolumn{1}{c|}{85.27}   &        \multirow{2}{*}{74.28}       \\ 
&      50           & \multicolumn{1}{c|}{\textbf{57.81}}  &    \multicolumn{1}{c|}{$204.08$, $\quad \;\;\lambda_2^* = 0.51$}        &    \multicolumn{1}{c|}{162.59}    &   \multicolumn{1}{c|}{\red{$\lambda_2^* = 0.49$}}    &    \multicolumn{1}{c|}{87.05}   &     \\ \hline
\end{tabular}
\tiny{\textbf{Note:} TSA = Testing Set Accuracy (in $\%$) for \eqref{MuliSVM}, TSA2 = Testing Set Accuracy (in $\%$) for \eqref{s-SVM}.}
\end{table}


\medskip 

\noindent From the preliminary numerical experiments on QCQPs with synthetic and real data matrices we can conclude that our algorithm SMBA is a better and viable alternative to some existing state-of-the-art methods and software.


\section{Conclusions} 
In this paper, we have considered an optimization problem with a smooth objective being either convex or strongly convex and with a large number of smooth convex functional constraints. We have designed a new primal stochastic gradient type algorithm, called Stochastic Moving Ball Approximation (SMBA), to solve such optimization problems where at each iteration SMBA performs a gradient step for the objective function to generate an intermittent point, and then takes a projection step onto one ball approximation of a randomly chosen constraint. The computational simplicity of the SMBA subproblem makes it suitable for problems with many functional constraints. We have provided a convergence analysis for our algorithm, deriving convergence rates of order   $\mathcal{O} (k^{-1/2})$,  when the objective function is convex, and $\mathcal{O} (k^{-1})$, when the objective function is strongly convex. Finally, we apply  SMBA algorithm to solve QCQPs based on synthetic and real data, demonstrating its viability and performance when compared to MBA  and two commercial softwares Gurobi~and~FICO.


\begin{appendices}
\section{}
\label{secA1}
\textit{Equivalent form of the feasibility step in SMBA algorithm.} In this appendix we prove that if $R_{v_k,\xi_k} > 0$ and $h(v_k,\xi_k) > 0$, then the step \eqref{eq:alg2step3SMBA} in SMBA algorithm can be written equivalently as:
\begin{align*}
    v_k - \frac{\beta}{L_{\xi_k}}\left(1 - \frac{\sqrt{R_{{v}_k,\xi_k}} }{\|v_k - c_{{v}_k,\xi_k}\|} \right) \nabla h (v_k, \xi_k) = (1-\beta)v_k + \beta  \Pi_{\mathcal{B}_{{v}_k,\xi_k}} (v_k).
\end{align*}
Indeed, if $R_{{v}_k,\xi_k} > 0$, we can compute the projection of $v_k$ onto the nonempty ball:
    \[ \mathcal{B}_{{v}_k,\xi_k} = \left\{y: \; q_{h(\cdot,\xi_k)}(y;{v}_k) \le 0 \right\} = \left\{y: \; \|y-c_{{v}_k,\xi_k}\|^2 \le R_{{v}_k,\xi_k} \right\},   \]
which has the explicit expression: 
\[ \Pi_{\mathcal{B}_{{v}_k,\xi_k}} (v_k)  = c_{{v}_k,\xi_k} + \frac{\sqrt{R_{{v}_k,\xi_k}}}{\max\left(\|v_k - c_{{v}_k,\xi_k}\|, \sqrt{R_{{v}_k,\xi_k}}\right) }(v_k - c_{{v}_k,\xi_k}), \]
    or, after adding and subtracting $v_k$, we obtain:
    \begin{align*}
       \Pi_{\mathcal{B}_{{v}_k,\xi_k}} (v_k)  = v_k - \left(1 - \frac{\sqrt{R_{{v}_k,\xi_k}} }{\max\left(\|v_k - c_{{v}_k,\xi_k}\|, \sqrt{R_{{v}_k,\xi_k}}\right)} \right) (v_k - c_{{v}_k,\xi_k}). 
    \end{align*}
Note that when $h(v_k,\xi_k) >0$ and $R_{{v}_k,\xi_k}>0$ we have the following (see \eqref{eq:mid1SMBA}): $$\max\left(\|v_k - c_{{v}_k,\xi_k}\|, \sqrt{R_{{v}_k,\xi_k}}\right) = \|v_k - c_{{v}_k,\xi_k}\|.$$ 
Thus, in this case, we have:
\begin{align*}
    (1-\beta)v_k + \beta  \Pi_{\mathcal{B}_{{v}_k,\xi_k}} (v_k) & = v_k - \beta\left(1 - \frac{\sqrt{R_{{v}_k,\xi_k}} }{\|v_k - c_{{v}_k,\xi_k}\|} \right) (v_k - c_{{v}_k,\xi_k})\\
    & = v_k - \frac{\beta}{L_{\xi_k}}\left(1 - \frac{\sqrt{R_{{v}_k,\xi_k}} }{\|v_k - c_{{v}_k,\xi_k}\|} \right) \nabla h (v_k, \xi_k).
\end{align*}

\end{appendices}


\section*{Acknowledgements}
\noindent The research leading to these results has received funding from UEFISCDI PN-III-P4-PCE-2021-0720, under project L2O-MOC, nr. 70/2022.\\

\noindent \textbf{Data availability} Not applicable.\\

\noindent\textbf{Conflicts of interest} The authors declare that they have no conflict of interest.


\end{document}